\def\E{ \mathrm{e} }									
\def\T{ \mathrm{T} }									
\def\NN{ \mathbb{N} }									
\def\QN{ \mathbb{Q} }									
\def\RN{ \mathbb{R} }									
\def\RNc{ \mathbb{R}_{\mathrm{c}} }		
\def\CN{ \mathbb{C} }									
\def\C{ \mathcal{C} }									
\def\Cc{ \mathcal{C}_{\mathrm{c}} }		
\def\A{ \mathcal{A} }									
\def\R{ \mathcal{R} }									
\def\M{ \mathcal{MIN} }								
\def\bsx{ \boldsymbol{x} }						
\def\bsxi{ \boldsymbol{\xi} }					
\def\bx{ \mathbf{x} }								
\def\by{ \mathbf{y} }								
\def\TM{ \mathrm{TM} }              
\def\Min{ \mathrm{Min} }
\newcommand{\Op}[1]{\mathrm{#1}}								
\theoremstyle{plain}
\newtheorem{theorem}{Theorem}[section]
\newtheorem{proposition}[theorem]{Proposition}
\newtheorem{lemma}[theorem]{Lemma}
\newtheorem{corollary}[theorem]{Corollary}
\theoremstyle{definition}
\newtheorem{definition}{Definition}
\theoremstyle{remark}
\newtheorem{remark}{Remark}
\newtheorem{example}{Example}
\newtheorem{question}{Question}
\begin{document}

\title[Iterative Optimization on Turing Machines]{Iterative Optimization of Multidimensional Functions on Turing Machines under Performance Guarantees}
\thanks{This work was partly supported by the German Federal Ministry of Education and Research (BMBF) within the national initiative on 6G Communication Systems through the research hub 6G-life under Grant 16KISK002, 
and within the national initiative for Quantum Communication in the BMBF quantum programs QD-CamNetz, Grant 16KISQ077, QuaPhySI, Grant 16KIS1598K, QUIET, Grant 16KISQ093, and QR-X, Grant 16KISQ037K.
The work of H.~V.~Poor was partly supported by the U.S. National Science Foundation under Grand~CNS-2128448.}

\author[H.~Boche]{Holger Boche}
\address{Holger Boche\\
				Technische Universit{\"a}t M{\"u}nchen,
				Lehrstuhl f{\"u}r Theoretische Informationstechnik,
				Arcisstrasse 21, 80290 M{\"u}nchen, Germany
				}
\email{boche@tum.de}

\author[V.~Pohl]{Volker Pohl}
\address{Volker Pohl\\
        Technische Universit{\"a}t M{\"u}nchen,
				Lehrstuhl f{\"u}r Theoretische Informationstechnik,
				Arcisstrasse 21, 80290 M{\"u}nchen, Germany
				}
\email{volker.pohl@tum.de}

\author[H. V. Poor]{H. Vincent Poor}
\address{H. Vincent Poor\\
				Princeton University,\\
        Department of Electrical and Computer Engineering,\\	    
				Princeton, NJ 08544, USA
				}
\email{poor@princeton.edu}



\subjclass[2000]{Primary 90C25, 90C06; Secondary 68W40, 03D80}

\date{\today}

\begin{abstract}
This paper studies the effective convergence of iterative methods for solving convex minimization problems using block Gauss--Seidel algorithms. 
It investigates whether it is always possible to algorithmically terminate the iteration in such a way that the outcome of the iterative algorithm satisfies any predefined error bound.
It is shown that the answer is generally negative. 
Specifically, it is shown that even if a computable continuous function which is convex in each variable possesses computable minimizers, a block Gauss--Seidel iterative method might not be able to effectively compute any of these minimizers. 
This means that it is impossible to algorithmically terminate the iteration such that a given performance guarantee is satisfied.
The paper discusses two reasons for this behavior.
First, it might happen that certain steps in the Gauss--Seidel iteration cannot be effectively implemented on a digital computer.
Second, all computable minimizers of the problem may not be reachable by the Gauss--Seidel method.
Simple and concrete examples for both behaviors are provided.
\end{abstract}

\keywords{
Convex optimization,
decentralized optimization,
computability,
Gauss--Seidel method,
Turing machine
}


\maketitle
\section{Introduction}
\label{sec:Indro}

Many problems in physics and engineering can be formulated as optimization problems.
The most important example may be the second law of thermodynamics which may be restated as the principle of minimum energy.
But also problems of finding the optimal allocation of limited resources, 
signal processing and signal recovery problems \cite{PalomarEldar_ConvexOpt,liu2024survey},
solutions for compressed sampling problems \cite{CandesTao_IT05,CandesRecht_MatrixCompl09,DeSantis_SIAMJOpt16},
or problems in financial mathematics \cite{BenidisFengPalomar_Now18} and operations research can be formulated as optimization problems.
In particular, a large number of problems and challenges in artificial intelligence and data science are formulated as optimization problems \cite{LeeBoKu_CompOpt_24}. 
Modern computer technology makes these theoretical optimization problems extremely powerful engineering tools because
fast digital hardware allows one to solve even huge optimization problems in high dimensions very fast based on advanced algorithms developed over the past few decades for the different optimization problems.

Nevertheless, the question of whether a particular value "solves" a certain problem depends on the actual requirements on the "solution" by the user.
Already immediately after the introduction of the mathematical notion of \emph{computing}, due to Turing \cite{Turing_1937,Turing_1938}, it became clear that few physical problems can be solved exactly by digital computations.
For most problems, the exact solution can only be approximated (arbitrarily well) by the outcome of computations on a digital computer. 
For this reason, Turing required that the result of the computing process satisfy a predefined bound on the approximation error.
We will discuss this requirement later in more detail, because the underlying question of this paper is whether it is always possible to control this approximation error for solutions obtained by iterative optimization methods.
At this point, we only mention that the algorithms from \cite{PalomarEldar_ConvexOpt,liu2024survey,CandesTao_IT05,CandesRecht_MatrixCompl09,BenidisFengPalomar_Now18} as many other algorithms in applications, do not satisfy this condition, i.e. in all these algorithms it is not possible to control the difference between the computed "solution" and the true value of the problem.
The mentioned limitation of digital computation mainly originates from the fact that a digital computer can compute exactly only with rational numbers.
Real numbers can generally not be represented exactly on a digital computer, but rather can only be approximated by rational numbers.
If a real number can effectively (i.e. by controlling the approximation error) be approximated by rational numbers, it is said to be \emph{(Turing-) computable}  (cf. \Cref{sec:Notation} for details). Otherwise it is said to be \emph{non-computable}.

This paper studies some consequences of this limitation of digital computers on the ability to solve optimization problems on digital hardware.
We basically consider the following simple optimization problem in the $m$-dimensional real Euclidean space $\RN^{m}$:
\begin{equation}
\label{equ:MinProbGen}
	\min_{\bx\in\R} f(\bx) 
\end{equation}
where $f : \RN^{m} \to \RN$ is a continuous function and $\R \subset \RN^{m}$ is a convex and compact subset of $\RN^{m}$.
It is well known \cite{PourEl_Computability} that the minimum value $\Op{Min}_{\R}(f) = \min_{\bx\in\R} f(\bx)$ of this optimization problem is always Turing computable.
Moreover, if the problem has a unique minimizer, i.e. if the set 
\begin{equation}
\label{equ:SetGlobalMinimizer}
	\M_{\R}(f) = \big\{ \widehat{\bx}\in\R : f(\widehat{\bx}) = \Op{Min}_{\R}(f) \big\}
\end{equation}
contains exactly one vector, then also this minimizer is Turing computable.
If the minimizer is not unique, then some or all of the minimizers might not be Turing computable \cite{PourEl_Computability} which means that these optimizers cannot effectively be computed on any digital computer. 

Still, as long as the optimization problem contains at least one computable minimizer, this minimizer can, in principle, algorithmically be computed on a digital computer.
One only needs to construct a sequence $\left\{ \bx^{(k)} \right\}_{k\in\NN}$ of computable vectors $\bx^{(k)}\in\RN^{m}$ that effectively converges to the computable minimizer $\widehat{\bx}$.
This is always possible, since the minimizer is computable, although finding such a sequence might be a fairly complicated and creative problem. 
The question is now whether this process, of finding a computable sequence that effectively converges to a computable minimizer, can (efficiently) be automated on a digital computer \cite{ComputingAsDisciplin}.

Practical algorithms usually apply a certain (suboptimal) strategy to find such a sequence $\big\{ \bx^{(k)} \big\}_{k\in\NN}$ that converges to a minimizer of the given function.
A very popular and successful strategy is to minimize successively only over one (or several) coordinates of $\bx\in\RN^{m}$ while keeping the other dimensions fixed.
This approach appears in the literature in many different variants and under various names such as block Gauss-Seidel methods, block coordinate decent method, or block coordinate update method, to mention only some of many variants of the general idea \cite{Beck_SIAMJOpt13,Wright_MathProg15,DeSantis_SIAMJOpt16,GrippoGaussSeidel2000,Razaviyayn_SIAMJopt13,Xu_SIALJOpt}.
Thus, if
\begin{equation}
\label{equ:Rectangle1}
	\R = [a_{1},b_{1}] \times [a_{2},b_{2}] \times \dots \times [a_{m},b_{m}]
\end{equation}
and if $\bx^{(0)} = (x^{(0)}_{1}, x^{(0)}_{2}, \dots, x^{(0)}_{m}) \in \RN^{m}$ is an arbitrary initialization vector then one successively solves, for $\ell=1,2,\dots,m$, the one dimensional optimization problems
\begin{equation}
\label{equ:ItarativLocalOpt_1D}
	x_{\ell}^{(k+1)} = \arg \min_{y\in [a_{\ell},b_{\ell}]} f(x_{1}^{(k+1)}, \dots, x_{\ell-1}^{(k+1)}, y, x_{\ell+1}^{(k)}, \dots, x_{m}^{(k)})	
\end{equation}
and iterates then over $k=0,1,2, \dots$.
This strategy produces a sequence $\left\{ \bx^{(k)} \right\}_{k\in\NN}$ that converges to a global minimizer $\widehat{\bx}$ of $f$ under fairly weak (convexity) conditions on $f$.
However, the so determined sequence $\left\{ \bx^{(k)} \right\}_{k\in\NN}$ may not \emph{effectively} converge to the global minimizer.
Instead, it may happen that:
\begin{itemize}
\item Some of the computational steps of the algorithm are not Turing computable, i.e. they may not be realizable on a digital computer with an effective control of the approximation error.
\item The sequence $\big\{ \bx^{(k)} \big\}_{k\in\NN}$ may converge to a non-computable minimizer of $f$.
\end{itemize}
Thus even through there exists a global computable optimizer of $f$, the automated procedure might not be able to find a corresponding approximation sequence $\left\{ \bx^{(k)} \right\}_{k\in\NN}$ of computable vectors that effectively converges to this minimizer.
So the automated procedure, i.e. the algorithm, might not be able to compute a global minimizer of $f$.

In this paper, we study the described iterative optimization strategy for finding the global minimizer of a function $f$.
We will show that there exists a very simple (piecewise linear) function $f$ that is convex in each coordinate (but not jointly convex),
that has infinitely many computable minimizers but such that the iterative optimization method is not able to find a computable approximation sequence that converges effectively to any of the computable minimizers.
In fact, we will give an example of a function $f$ for which some of the computational steps of the iterative algorithm are not Turing computable,
and we will provide an example of a function $f$, for which the iterative algorithm will always converge to non-computable minimizers, even though $f$ possesses also computable minimizers.

The remainder of this paper is organized as follows.
\Cref{sec:Notation} introduces our main notation and recalls some concepts from computability analysis that will be needed in the paper.
\Cref{sec:OptimizationProblem} formulates and states the optimization problem under consideration and it explains the iterative algorithm that will be investigated in greater detail.
\Cref{sec:ProblemWithAssignmentFunc,sec:Reachability} will then provide examples that illustrate that the convergence of the iterative optimization algorithm is generally not effective.
The paper closes with a short discussion on possible extensions in \Cref{sec:Extension} and with summary in \Cref{sec:Summary}.

\section{Notation and preliminaries}
\label{sec:Notation}

We write $\RN$ for the set of real numbers, and $\RN_{+}$ and $\RN_{-}$ for the subset of all positive and negative real numbers, respectively.
Throughout this paper, we consider functions defined on the usual $m$-dimensional Euclidean space $\RN^{m}$ for some dimension $m\geq 1$.
Vectors in $\RN^{m}$ will be denoted by boldface lower-case letters and they will be written as row vectors like $\bx = (x_{1}, x_{2}, \dots, x_{m})$.
As usual, $[a,b] = \left\{ x\in \RN : a\leq x \leq b\right\}$ denotes a closed interval on $\RN$. 
For real numbers $a_{i} < b_{i}$, $i=1, 2, \dots, m$, the Cartesian product \eqref{equ:Rectangle1} is said to be a (closed) \emph{rectangle} in $\RN^{m}$.
We say that $\R$ is a \emph{computable rectangle}, if all $a_{i}, b_{i}$, $i=1, 2, \dots, m$ are computable numbers (see \Cref{def:CompNumber} below). 
The set of all continuous functions defined on $\RN^{m}$ or on a rectangle $\R \subset \RN^{m}$ are denoted by $\C(\RN^{m})$ or $\C(\R)$, respectively.
Similarly, for any $K\in\NN$, $\C^{K}(\RN^{m})$ and $\C^{K}(\R)$ denotes the set of all $K$-times continuously differentiable functions on $\RN^{m}$ and $\R$, respectively.

This paper investigates the optimization problem~\eqref{equ:MinProbGen} for at least piecewise differentiable functions $f$.
A point $\widetilde{\bx} \in \R$ is said to be a \emph{critical point} for problem~\eqref{equ:MinProbGen} if
\begin{equation}
\label{equ:defCriticalPoint}
	\nabla f(\widetilde{\bx}) (\by - \widetilde{\bx})^{\T} \geq 0\,,
	\quad\text{for all}\ \by\in \R,
\end{equation}
where $\nabla f(\widetilde{\bx}) = (\partial f/\partial x_{1}, \partial f/\partial x_{2}, \dots, \partial f/\partial x_{m})(\widetilde{\bx})$ denotes the gradient of $f$ at $\widetilde{\bx}$.

\subsection{Computability analysis}
\label{sec:CompAnalysis}

This section briefly reviews the main concepts an notion of computability analysis as far as they are needed in this paper. 
We refer to standard textbooks (e.g., \cite{Turing_1937,Turing_1938,PourEl_Computability,Weihrauch_ComputableAnalysis,Friedman_CompComplexity_84,Ko_Complexity91}) for more detailed expositions.

The central concept of computability analysis is the notion of effective convergence.

\begin{definition}[Effective convergence]
Let $\bsx = \left\{ x_{n} \right\}_{n\in\NN}$ be a sequence of real numbers that converges to $x\in\RN$.
We say that $\bsx$ converge effectively to $x$ if
\begin{equation*}
	\left|x - x_{n}\right| \leq 2^{-n}\,,
	\quad\text{for all}\ n\in\NN\,.
\end{equation*}
\end{definition}
So for a sequence that effectively converges, it is possible to control the approximation error $\left|x - x_{n}\right|$,
in the sense that for any arbitrary small approximation error $\epsilon = 2^{-n}$ 
it is possible to determine algorithmically an index $n\in\NN$ such that the approximation error is guaranteed to be less than $\epsilon$.

Every $x\in\RN$ is the limit of a sequence of rational numbers, but only if there exists a rational sequence that effectively converges to $x$, it is said to be computable.

\begin{definition}[Computable number and vector]
\label{def:CompNumber}
An $x\in\RN$ is said to be \emph{computable} if there exists a sequence $\left\{ r_{k} \right\}_{k\in\NN} \subset\QN$ of rational numbers that converges effectively to $x$.
In this case, the sequence $\left\{ r_{k} \right\}_{k\in\NN}$ is said to be a \emph{representation} of $x$.\\
A vector $\bx\in\RN^{m}$ is said to be computable if each of its components is a computable number.
\end{definition}
Subsequently, we write $\RNc \subsetneq \RN$ for the proper subfield of all computable real numbers and $\RNc^{m}$ for the set of all computable vectors in $\RN^{m}$.

\begin{definition}[Computable sequence]
A sequence $\bsx = \left\{ x_{n} \right\}_{n\in\NN}$ of real numbers is said to be \emph{computable} if there exists a double index sequence $\left\{ r_{n,k} \right\}_{n,k\in\NN} \subset\QN$ of rational numbers such that for every $n\in\NN$
\begin{equation*}
	\left|x_{n} - r_{n,k} \right| < 2^{-k}\,,
	\quad\text{for all}\ k\in\NN\,.
\end{equation*}
A sequence $\left\{ \bx_{n} \right\}_{n\in\NN} \subset\RN^{m}$ is said to be computable if every component is a computable sequence.
\end{definition}

Besides computable numbers and sequences, \emph{computable functions} will play an important role in this paper. 
There are different notions of computable functions and we refer to \cite{AvigadBrattka_2014} for an overview of these different notions.
Here, we need in particular the following two:

\begin{definition}[Computable function]
Let $m,M\in\NN$. A function $f : \RN^{m} \to\RN^{M}$ is said to be
\begin{itemize}
\item \emph{Borel--Turing computable},
if there exists a Turing machine that transforms every representation $\left\{ \bx_{n} \right\}_{n\in\NN}$ of $\bx$ to a representation of $f(\bx)$.
\item \emph{Banach--Mazur computable},
if for every computable sequences $\left\{ \bx_{n} \right\}_{n\in\NN} \subset \RN^{m}$ the sequence $\left\{ f(\bx_{n}) \right\}_{n\in \NN} \subset\RN^{M}$ is computable.
\end{itemize}
\end{definition}
We note that the notion of being Banach--Mazur computable is more general than being Borel--Turing computable.
Thus every Borel--Turing computable function is also Banach--Mazur computable but there exist Banach--Mazur computable functions that are not Borel--Turing computable.

A well known example of a function that is not Banach--Mazur computable is the sign function.
This example will be of some importance in this paper. 
Therefore we state this result here in the form that is needed later.

\begin{lemma}
\label{lem:Galpha}
Let $a\in\RNc$, $a>0$, let $\alpha \in [-1,1]$ be arbitrary, and let $G_{\alpha} : [-a,a] \to \RN$ be the function defined by
\begin{equation*}
	G_{\alpha}(x) = \left\{\begin{array}{rll}
	1 & : & x<0\\
	\alpha & : & x=0\\
	-1 & : & x>0
	\end{array}\right. .
\end{equation*}
Then $G_{\alpha}$ is not Banach--Mazur computable and therefore not Borel--Turing computable.
\end{lemma}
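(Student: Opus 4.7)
The plan is to derive a contradiction by showing that Banach--Mazur computability of $G_\alpha$ would yield a decision procedure for an arbitrary recursively enumerable but non-recursive set $A \subset \NN$, which is known to be impossible.

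Concretely, I would fix such an $A$ together with an injective total computable enumeration $f : \NN \to \NN$ of $A$, and define
$$
x_n = \begin{cases} \sigma \cdot c \cdot 2^{-(f^{-1}(n)+1)} & \text{if } n \in A, \\ 0 & \text{if } n \notin A,\end{cases}
$$
where $c = \min(1,a) \in \RNc$ and $\sigma \in \{+1,-1\}$ is a sign to be fixed later depending on $\alpha$. The first task is to verify that $\{x_n\}_{n\in\NN}$ is a computable sequence of reals in $[-a,a]$. To produce a rational approximation of $x_n$ with error at most $2^{-k}$, one runs the partial enumeration $f(0),\dots,f(k-1)$: if some value $f(j)=n$ appears, one outputs a sufficiently accurate rational approximation of $\sigma \cdot c \cdot 2^{-(j+1)}$; otherwise one outputs $0$, which differs from the true $x_n$ by at most $c \cdot 2^{-(k+1)} < 2^{-k}$. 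By construction, $x_n = 0$ iff $n \notin A$, and whenever $x_n \neq 0$ its sign equals $\sigma$.

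Suppose now, for contradiction, that $G_\alpha$ is Banach--Mazur computable. Then $\{G_\alpha(x_n)\}_{n\in\NN}$ is a computable real sequence, so each of its terms can be rationally approximated to any prescribed accuracy algorithmically. For $\alpha \in (-1,1)$ the three candidate values $\{-1,\alpha,1\}$ are separated by a computable positive distance $\delta > 0$, and an approximation of $G_\alpha(x_n)$ to within $\delta/3$ identifies the value uniquely, hence decides whether $x_n = 0$, i.e.\ whether $n \in A$. For $\alpha = 1$ I would fix $\sigma = +1$, so that $G_\alpha(x_n) = -1$ iff $n \in A$ and $G_\alpha(x_n) = 1$ iff $n \notin A$, and an approximation to within $1/2$ suffices; the case $\alpha = -1$ is symmetric with the choice $\sigma = -1$. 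In every case one obtains an algorithm that decides membership in $A$, contradicting its non-recursiveness. Since Borel--Turing computability implies Banach--Mazur computability, $G_\alpha$ is also not Borel--Turing computable, yielding the full statement.

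The main obstacle is the conceptual one of reconciling the two requirements on $\{x_n\}$: uniform computability as a real sequence, together with a zero-pattern that encodes the undecidable set $A$. These are compatible because a computable sequence is defined through rational approximations with prescribed error $2^{-k}$ and never through exact evaluation, so outputting $0$ for those $n$ whose membership in $A$ has not yet been witnessed by stage $k$ is consistent with both possibilities $n \notin A$ (where $x_n = 0$ is exact) and $n \in A$ with a large enumeration index (where $|x_n|$ is already below the permitted tolerance). The boundary cases $\alpha = \pm 1$, where $G_\alpha(0)$ coincides with one of the nonzero values, only require the matching choice of sign $\sigma$ described above and introduce no further difficulty.
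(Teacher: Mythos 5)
Your proof is correct and is essentially the argument the paper intends: the paper's own proof is only a sketch which, for computable $\alpha$, defers the key step to the technique of \cite[Theorem~2]{BSP_TIT23}, and that technique is precisely the construction you carry out explicitly --- a computable sequence of reals in $[-a,a]$ tending to $0$ from a fixed side whose zero-pattern encodes a recursively enumerable non-recursive set, so that Banach--Mazur computability of $G_{\alpha}$ would decide that set. The only (harmless) difference is that your sign-selection argument treats computable and non-computable $\alpha$ uniformly, whereas the paper dispatches non-computable $\alpha$ separately via $G_{\alpha}(0)\notin\RNc$; both routes work.
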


\begin{proof}[Sketch of proof]
Assume $\alpha \in [-1,1]$ is not a computable number. Then $G_{\alpha}(0)\notin\RNc$ and so $G_{\alpha}$ cannot be Banach--Mazur computable.
If $\alpha\in [-1,1]$ is a computable number, then we can find a computable sequence of computable numbers $\left\{ x_{n} \right\}_{n\in\NN}$ such that 
$\left\{ G_{\alpha}(x_{n})\right\}_{n\in\NN}$ is not a computable sequence of computable numbers. 
To construct such a sequence $\left\{ x_{n} \right\}_{n\in\NN} \subset \RNc$, we can use a technique from the proof of \cite[Theorem~$2$]{BSP_TIT23}.
\end{proof}

\begin{definition}[Computable continuous function]
Let $\R \subset \RN^{m}$ be a computable rectangle.
A function $f : \R \to\RN$ is said to be \emph{effectively uniformly continuous} if there exits a recursive function $\Op{d} : \NN \to \NN$ such that for every $k\in\NN$ and $n = \Op{d}(k)$
\begin{equation*}
	\left\|\bx_{1} - \bx_{2} \right\| \leq 2^{-n}
	\quad\text{implies}\quad
	\left|f(\bx_{1}) - f(\bx_{2})\right| \leq 2^{-k}\,.
\end{equation*} 
A function that is Banach--Mazur computable and effectively uniformly continuous is called a \emph{computable continuous function} and
we write $\Cc(\R)$ for the set of all computable continuous functions on the rectangle $\R$.
\end{definition}
Similarly, $\Cc^{k}(\R)$ stands for the set of all $k$-times continuously differentiable computable functions whose partial derivatives up to order $k$ are all computable continuous functions.

\section{Optimization of smooth functions}
\label{sec:OptimizationProblem}

This section explains in greater detail the iterative optimization algorithm that will be studied in this paper and introduces some more notation needed to formulate and to prove our main results.

\subsection{Minimum value and minimizer}

We consider the following general minimization problem with a so-called box constraint \cite{Boyd_ConvexOpt}:
Let
$$\R = \R_{1} \times \R_{2} \times \dots \times \R_{r} \subset \RN^{m}$$
be a computable rectangle with $\R_{\ell} \subset \RN^{n_{\ell}}$ and $\sum^{r}_{\ell=1} n_{\ell} = m$, 
and let $f : \R \to \RN$ be a continuous function on $\R$.
Then we may ask for the \emph{minimum value} of $f$ on the rectangle $\R$, i.e. for the value
\begin{equation}
\label{equ:globalMin}
	\Op{Min}_{\R}(f)
	= \min_{\bx\in\R} f(\bx)
	= \!\!\!\min_{\substack{\bx_{\ell}\in\R_{l}, 1\leq\ell\leq r}} f(\bx_{1},\bx_{2},\cdots,\bx_{r}).
\end{equation}
Apart from the problem of finding the minimum value $\Op{Min}_{\R}(f)$ of $f$ on $\R$,
we may ask for a corresponding \emph{minimizer}, i.e. for a vector 
\begin{equation}
\label{equ:Minimizer}
		\widehat{\bx}\in\R
		\quad\text{such that}\quad
		f\left(\widehat{\bx}\right) = \min_{\bx\in\R} f(\bx)\,.
\end{equation}
The minimizer is generally not unique but there may be a whole set \eqref{equ:SetGlobalMinimizer} of \emph{global minimizers} in $\R$, i.e. a set of vectors that satisfy \eqref{equ:Minimizer}.

It depends on the actual application whether one needs to find the minimum value or the minimizer.
However, from the previous definition, it is clear that if one knows a minimizer $\widehat{\bx}$ then one also knows the minimum value $\Op{Min}_{\R}(f) = f(\widehat{\bx})$.
Conversely, knowing $\Op{Min}_{\R}(f)$ may not help in finding a corresponding minimizer $\widehat{\bx}$.
This observation indicates that the problem of finding the minimizer is usually harder than just determining the minimum value.
Unfortunately, in practical application the minimizer is often much more important than the minimum value.
In fact, the minimum value often has no particular meaning or significance, rather the point where this minimum value is attained is of importance.

\begin{remark}
Instead of the minimization problem, one may consider a corresponding maximization problem,
i.e. the problem finding the maximum value $\Op{Max}_{\R}(f)$ of $f$  or the maximizer of $f$ on $\R$.
Such a maximization problem can always be transformed into a minimization problem by considering the function $-f$ on $\mathcal{R}$. 
Then the minimizer of $-f$ is the maximizer of $f$.
So without loss of generality, this paper only discusses the minimization problem \eqref{equ:globalMin}.
\end{remark}

\subsection{Computability of the minimum value and the minimizer}

Apart from very special cases, there exists no closed-form solution for the minimum value $\Op{Min}_{\R}(f)$ or for the minimizer $\widehat{\bx}$ of a minimization problem.
Therefore, these values are usually approximated using numerical algorithms that determine a sequence that converge to the optimal value.
For a wide variety of optimization problems, algorithms are known that converge to the minimum value or minimizer, respectively.
Most notable are certainly the many different algorithms developed for convex optimization problems \cite{Boyd_ConvexOpt,Bertsekas_ConvexOpt,BenTalNemirovski_SIAM01}.
Nevertheless, from a practical point of view, the question is not only whether the algorithm converges to the optimum but whether this convergence is \emph{effective},
i.e. whether it is possible to control the approximation error and to stop algorithmically the computation if a desired error bound is achieved.
This problem of effective convergence is equivalent to the question of whether the minimum value or the minimizer are computable.

With respect to the computation of the  minimum value $\Op{Min}_{\R}(f)$, the following result concerning its computability is well known
(cf., e.g., \cite[Chapter~$6$]{PourEl_Computability}).

\begin{proposition}
\label{prop:GlobalProblem}
Let $\R_{\ell} \subset \RN^{n_{\ell}}$, $\ell = 1,2,\dots,r$, be arbitrary computable rectangles, and let $\R = \R_{1}\times\dots\times\R_{r} \subset \RN^{m}$.
There exists a Turing machine $\TM_{\mathrm{Min}}$ that computes for every computable continuous function $f \in \Cc(\R)$ the value $\Op{Min}_{\R}(f)$.
\end{proposition}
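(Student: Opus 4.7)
The plan is to exploit the effective uniform continuity of $f \in \Cc(\R)$ to reduce the global minimization over the compact rectangle $\R$ to a minimization over a finite rational grid, whose minimum can then be computed to any prescribed precision.

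First I would fix a target output precision $2^{-k}$. Using the recursive modulus of continuity $\Op{d}$ associated with $f$, set $n := \Op{d}(k+1)$. Then construct a finite grid $G_{n} \subset \R$ consisting of rational points spaced at most $2^{-n}$ apart; this is possible because $\R$ is a computable rectangle whose corners are computable numbers and which can therefore be subdivided algorithmically into rational sub-cells of the required mesh. For every $\bx \in G_{n}$, evaluate $f$ to within $2^{-(k+1)}$. This step is legitimate because a function that is Banach--Mazur computable and effectively uniformly continuous on a computable compact domain admits an algorithm mapping any rational input together with a precision parameter to a rational number within that precision of $f(\bx)$: one takes a computable sequence of rationals converging to $\bx$, evaluates $f$ along it via Banach--Mazur computability, and truncates using the modulus $\Op{d}$. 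Let $\widetilde{f}(\bx)$ denote such an approximation and output
\begin{equation*}
   M_{k} \,:=\, \min_{\bx \in G_{n}} \widetilde{f}(\bx),
\end{equation*}
which is a rational number algorithmically produced since $G_{n}$ is finite.

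The error analysis proceeds in two independent steps. For the discretization error, every $\by \in \R$ admits some $\bx \in G_{n}$ with $\|\by - \bx\| \leq 2^{-n}$, so effective uniform continuity yields $|f(\by) - f(\bx)| \leq 2^{-(k+1)}$; hence $\min_{\bx \in G_{n}} f(\bx)$ and $\Op{Min}_{\R}(f)$ differ by at most $2^{-(k+1)}$. For the evaluation error, each $\widetilde{f}(\bx)$ differs from $f(\bx)$ by at most $2^{-(k+1)}$, whence $M_{k}$ differs from $\min_{\bx\in G_{n}} f(\bx)$ by at most $2^{-(k+1)}$ as well. Combining the two bounds gives $|M_{k} - \Op{Min}_{\R}(f)| \leq 2^{-k}$, so $\{M_{k}\}_{k\in\NN}$ converges effectively to $\Op{Min}_{\R}(f)$.

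The main subtlety, rather than a genuine obstacle, is that $\TM_{\mathrm{Min}}$ must operate uniformly in $f$, not merely for a single fixed $f$. This is handled by regarding $f$ as supplied through an oracle consisting of its rational evaluation routine together with its computable modulus $\Op{d}$; these are precisely the data defining membership in $\Cc(\R)$, and the construction above invokes them only in a uniform recursive manner. The result is classical in computable analysis, and the sketch above is essentially the argument given in \cite{PourEl_Computability} for the minimization of computable continuous functions on compact computable domains.
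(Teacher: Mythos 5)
Your proposal is correct, and it coincides with the paper's (implicit) approach: the paper gives no proof of \Cref{prop:GlobalProblem} but simply cites \cite[Chapter~6]{PourEl_Computability}, and your grid-discretization argument --- reduce to a finite rational mesh via the recursive modulus $\Op{d}$, evaluate approximately, take the finite minimum, and split the error into a discretization and an evaluation part --- is exactly the classical proof of that cited result. The only two points deserving a word of care, both of which your sketch already gestures at, are (i) the grid must be built from rational approximations of the computable endpoints $a_{i},b_{i}$ so that every point of $\R$ is within $2^{-n}$ of a grid point actually lying in $\R$, and (ii) the uniform rational evaluation $\widetilde{f}(\bx)$ is legitimately obtained by applying Banach--Mazur (sequential) computability to the single computable sequence enumerating all grid points over all $n$, which yields the required computable double array of rational approximations rather than merely a pointwise statement.
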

\Cref{prop:GlobalProblem} shows that the minimum value \eqref{equ:globalMin} is always algorithmically computable on a digital computer provided $f$ is a computable continuous function.
Note in particular that the Turing machine $\TM_{\Op{Min}}$ in \Cref{prop:GlobalProblem} is \emph{universal} in the sense that it only depends on the rectangle $\R$. 
So for a fixed $\R$, the corresponding $\TM_{\mathrm{Min}}$ can compute $\Op{Min}_{\R}(f)$ for all $f \in \Cc(\R)$ as input.
Thus, if $\R$ and $\TM_{\mathrm{Min}}$ are fixed and if $f \in \Cc(\R)$ is arbitrary, 
then for every description of $f$ the Turing machine $\TM_{\mathrm{Min}}$ effectively computes a description of the real number $\Op{Min}_{\R}(f)$ \cite{LeeBoKu_CompOpt_24}.

With respect to the computation of the minimizer $\widehat{\bx}$, it is known  that if the minimization problem has a unique (global) minimizer, i.e. if the set \eqref{equ:SetGlobalMinimizer} contains only one vector, then this minimizer is always computable (cf., e.g., \cite[Chapter~$I.0.6$]{PourEl_Computability}).
If the minimizer is not unique then some (or all) minimizers might not be computable and there exist several examples of computable continuous functions $f$ that attain their minimum only at non-computable points (see, e.g., \cite{Specker_SatzVomMaximum} and references in \cite{PourEl_Computability}).

\subsection{Iterative optimization methods}
\label{sec:IterativeMethod}

The global optimization problem \eqref{equ:globalMin} that minimizes jointly over all $m$ components of $\bx\in\R \subset \RN^{m}$ is often considered as being too complex. 
Therefore, one applies block coordinate optimization methods of the Gauss--Seidel type that iteratively optimize over sub-rectangles $\R_{\ell}$, $\ell = 1,2,\dots, r$ while keeping the other variables fixed \cite{Beck_SIAMJOpt13, Powell_MathProgramm1973,Wright_MathProg15,Nesterov_SIAMOptm_12,Razaviyayn_SIAMJopt13,GrippoGaussSeidel2000,Xu_SIALJOpt,XuYin_SIAM_JIS13}.
Starting with an initial guess for the minimizer $\widetilde{\bx}^{(0)} = (\widetilde{\bx}_{1}^{(0)}, \widetilde{\bx}_{2}^{(0)}, \dots, \widetilde{\bx}_{r}^{(0)}) \in \R$, one solves for $k=0$ the optimization problems
\begin{equation}
\label{equ:ItarativLocalOpt}
	\widetilde{\bx}_{\ell}^{(k+1)} = \arg\min_{\by\in\R_{\ell}} f\big(\widetilde{\bx}_{1}^{(k+1)}, \dots, \widetilde{\bx}_{\ell-1}^{(k+1)},\by,\widetilde{\bx}_{\ell+1}^{(k)}, \dots, \widetilde{\bx}_{r}^{(k)}\big)
\end{equation}
successively for $\ell=1, 2, \dots, r$ and iterates over $k=1,2,\dots$.
In particular, if $n_{1} = n_{2} = ... = n_{\ell} = 1$ then each step optimizes only over one coordinate of the vector $\widetilde{\bx}^{(k)}\in\RN^{m}$ while leaving all other coordinates fixed,  (cf. \eqref{equ:ItarativLocalOpt_1D}).
This procedure yields a sequence $\left\{ \widetilde{\bx}^{(k)} \right\}_{k\in\NN}$ of approximations of a minimizer of the optimization problem \eqref{equ:globalMin}.
The components of each vector $\widetilde{\bx}^{(k)}$ are minimizers of a local optimization problem according to \eqref{equ:ItarativLocalOpt}. 
Therefore we say that $\left\{ \widetilde{\bx}^{(k)} \right\}_{k\in\NN}$ is a \emph{sequence of local minimizers}.

One can show that under some mild conditions on $f$, this sequence of local minimizers converges to a global minimizer $\widehat{\bx}$ of the optimization problem \eqref{equ:globalMin}.
In fact, there exist many studies that investigate the convergence behavior of iterative algorithms for solving optimization problems of the form \eqref{equ:MinProbGen} (see, e.g., \cite{Beck_SIAMJOpt13,GrippoGaussSeidel2000,Razaviyayn_SIAMJopt13,XuYin_SIAM_JIS13}). 
All these investigations consider the convergence of sequences $\left\{ \widetilde{\bx}^{(k)} \right\}_{k\in\NN}$ that were obtained by an iterative optimization algorithm. 
The strongest convergence statements of these investigations have typically the following form:
Let $\left\{ \widetilde{\bx}^{(k)} \right\}_{k\in\NN}$ be a sequence of local minimizers determined by an iterative optimization algorithm.
Then, under some conditions on $f$, this sequence has at least one limit point which implies (see, e.g., \cite{GrippoGaussSeidel2000}) that all limit points of this sequence are critical points of \eqref{equ:MinProbGen}.
Apart from results regarding the convergence of iterative optimization algorithms there seems to exist no estimates on the convergence speed of these algorithms.
However, such results are highly desirable from a practical point of view because the convergence results alone imply, in principle, that the iterative algorithm has to compute \emph{ad infinitum} to reach the optimal value.
In practice, however, one needs a criterion to stop the iteration if a desired error bound is achieved,
i.e. one needs the possibility to pass an integer $M\in\NN$ to the algorithm such that the algorithm is able to stop the iteration at $K\in\NN$ if $\left|\widehat{\bx} - \widetilde{\bx}^{(K)}\right| < 2^{-M}$.
Up to now, no such algorithm is known and the following results will show that generally no such algorithm can exist, even in the simple case $m=2$.

\begin{remark}
The non-existence of such an algorithmic stopping criterion was recently observed in several central and concrete problems in information theory.
One example is the celebrated \emph{Blahut--Arimoto algorithm} \cite{Blahut_TIT72,Arimoto_TIT72}. It computes an infinite sequence of input distributions of a channel that converges to the capacity achieving input distribution \cite{Csizar_TIT74}.
Since its invention, researchers tried to find a computable stopping algorithm that is able to stop the iteration based on a required approximation error. 
To date, no such algorithm was found and \cite{BSP_TIT23} showed that no such computable stopping exists for the Blahut--Arimoto algorithm.
We refer to \cite{LeeBK_TIT24} for more information theoretic questions that show a similar behavior. 
\end{remark}

Presupposing a sequence of local minimizers $\left\{ \widetilde{\bx}^{(k)} \right\}_{k\in\NN} \subset \RN^{m}$ converges to a global minimizer $\widehat{\bx}$ of \eqref{equ:globalMin}, this paper asks whether this convergence is always effective, i.e. whether we are able to algorithmically stop the iteration if a predefined approximation error is achieved.

\begin{question}
\label{question:effectiveConv}
Given a sequence $\big\{ \widetilde{\bx}^{(k)} \big\}_{k\in\NN}$ of local minimizers that converges to a global minimizer $\widehat{\bx}$ of \eqref{equ:globalMin}.
Is this convergence always effective?
\end{question}

As a second problem, we note that the $\arg\min$-operation in Step~\eqref{equ:ItarativLocalOpt} of the iterative algorithm is more like a pseudo-code.
In this form, it is not clear whether there exists an effective implementation for this operation on digital hardware.
The answer will, of course, strongly depend on $f$ and $\R_{\ell}$ and raises the following general question.

\begin{question}
\label{question:Implement_argMin}
Does there always exist an effective implementation of Step~\eqref{equ:ItarativLocalOpt}?
\end{question}

Associated with the block coordinate optimization method \eqref{equ:ItarativLocalOpt}, we define for every $\ell = 1,2,\dots,r$, the sets
\begin{align}
\label{equ:set_Minl}
	\M_{\ell}
	&= \M_{\ell}(\bx_{1},\dots,\bx_{\ell-1},\bx_{\ell+1},\dots,\bx_{r})\nonumber\\
	&= \Big\{ \widehat{\bx}_{\ell}\in\R_{\ell}\ :
	f(\bx_{1},\dots,\bx_{\ell-1},\widehat{\bx}_{\ell},\bx_{\ell+1},\dots,\bx_{r})\nonumber\\
	&= \min_{\by\in\R_{\ell}} f(\bx_{1},\dots,\bx_{\ell-1},\by,\bx_{\ell+1},\dots,\bx_{r}) \Big\}
\end{align}
of all \emph{local minimizes} with respect to the $\ell$th variable for fixed variables $\bx_{1}$, $\dots$, $\bx_{\ell-1}$, $\bx_{\ell+1}$, $\dots$, $\bx_{r}$.
So the set $\M_{\ell} = \M_{\ell}(\bx_{1},\dots,\bx_{\ell-1},\bx_{\ell+1},\dots,\bx_{r})$ contains all $\widehat{\bx}\in\R_{\ell}$ that minimize the right hand side of \eqref{equ:ItarativLocalOpt}.
Then the $\arg\min$-operator in \eqref{equ:ItarativLocalOpt} simply chooses one element from the set $\M_{\ell}$.
This operation can be described by a so-called assignment function:

\begin{definition}[Assignment function]
\label{def:AssignFkt}
Consider the optimization problem \eqref{equ:globalMin} for a function $f : \R \to\RN$.
A function $G_{\ell} : \R_{1}\times\dots\times\R_{\ell-1}\times\R_{\ell+1}\times\dots\times\R_{r} \to \R_{\ell}$
is said to be an \emph{assignment function of $f$} for the $\ell$th coordinate of the iterative optimization procedure if it has the property
\begin{equation}
\label{equ:DefOfGl}
	G_{\ell}(\bx_{1},\dots,\bx_{\ell-1},\bx_{\ell+1},\dots,\bx_{r}) \in \M_{\ell}\,.
\end{equation}
The set of all assignment functions of $f$ for the $\ell$th coordinate is denoted by $\mathcal{A}_{\ell}(f)$.
\end{definition}
Using this notion, the iteration step \eqref{equ:ItarativLocalOpt} of the optimization problem can be written as
\begin{equation}
\label{equ:OptStepWithG}
	\bx_{\ell}^{(k+1)}
	= G_{\ell}\big(\bx_{1}^{(k+1)}, \dots, \bx_{\ell-1}^{(k+1)},\bx_{\ell+1}^{(k)}, \dots, \bx_{r}^{(k)}\big)\,.
\end{equation}
In particular, the coordinate-wise optimization where the dimension of each rectangle $\R_{\ell}$ is equal to one, 
can be rewritten as shown in Algorithm~\ref{alg:CoordinateWise2}.
\begin{algorithm}
\caption{Coordinate-wise optimization}
\label{alg:CoordinateWise2}
\begin{algorithmic}[1]
\STATE{Initialize $\bx^{(0)} = (x^{(0)}_{1},\cdots,x^{(0)}_{m}) \in \RN^{m}$ and $k=0$}
\REPEAT 
\FOR{$\ell = 1,2,\dots,m$} 
\STATE $x^{(k+1)}_{\ell} = G_{\ell}(x^{(k+1)}_{1}, \dots, x^{(k+1)}_{\ell-1},x^{(k)}_{\ell+1}, \dots, x^{(k)}_{m})$
\ENDFOR
\STATE $k = k+1$
\UNTIL{Convergence}
\RETURN{$\bx^{(k)}$}
\end{algorithmic}
\end{algorithm}

\begin{remark}
Note that for every $\ell \in \left\{ 1,2,\dots, r \right\}$ the set $\A_{\ell}(f)$ contains generally many different assignment functions.
Namely, there are as many different functions $G_{\ell}$ as there are different vectors in $\M_{\ell}$.
In principle, one can choose any $G_{\ell} \in \mathcal{A}_{\ell}(f)$ for the optimization step \eqref{equ:OptStepWithG}.
However, in order that step \eqref{equ:ItarativLocalOpt} be algorithmically solvable on a digital computer, we need to choose $G_{\ell}\in\mathcal{A}_{\ell}(f)$ to be a computable function.
The interesting question is now whether this is always possible.
The following section will show that there exist very simple examples of computable continuous functions $f$ such that for some $\ell\in \{1,2,\dots,r\}$, the set $\mathcal{A}_{\ell}(f)$ contains no computable assignment function.
\end{remark}

\section{Algorithmic computability of assignment functions}
\label{sec:ProblemWithAssignmentFunc}

To make our arguments as clear as possible,
we consider the simplest case of the general optimization problem \eqref{equ:globalMin}. 
Namely we consider functions $f : \RN^{2} \to \RN$ on the rectangle $\R = \R_{1}\times\R_{2}$ with $\R_{1} = [-a,a]$ and $\R_{2} = [-b,b]$ for arbitrary positive computable numbers $a,b\in\RNc$, i.e. we consider the minimization problem
\begin{equation}
\label{equ:Min2D}
	\min_{(x_{1},x_{2}) \in [-a,a]\times [-b,b]} f(x_{1}, x_{2})\,.
\end{equation}
In view of the general optimization problem stated at the beginning of \Cref{sec:OptimizationProblem}, we thus have $r = 2$ and $n_{1} = n_{2} = 1$, and the corresponding iterative optimization algorithm is a coordinate-wise optimization as shown in Algorithm~\ref{alg:CoordinateWise2}.

\subsection{A function with no computable assignment function}

Our first theorem gives an example of a computable continuous function $f_{1}$ of two variables that is convex in each variable but such that the corresponding set $\A_{1}(f_{1})$ of assignment functions for the first step in the coordinate wise optimization algorithm contains no computable assignment function $G_{1}$.

\begin{theorem}
\label{thm:MainThm2D}
Let $a,b\in\RNc$ with $a>1$ and $b>0$ be arbitrary and let $\R = \R_{1}\times \R_{2}$ with $\R_{1} = [-a,a]$ and $\R_{2} = [-b,b]$.
There exists a computable continuous function $f_{1} : \R \to \RN$ with the following properties:
\begin{enumerate}
\item 
For every fixed $x_{2}\in\R_{2}$, 
the function $f_{1}(\cdot,x_{2}) : \R_{1}\to\RN$ is a computable continuous function that is convex and piecewise linear (with 3 linear pieces).
\item
For every fixed $x_{1}\in\R_{1}$,
the function $f_{1}(x_{1},\cdot) : \R_{2}\to\RN$ is a computable continuous function that is convex and piecewise linear (with 2 linear pieces).
\item
The function $f_{1}$ has only global minima (i.e. it has no local minima that are not global) and the set of all global minima is convex.
\item
For every $x_{2}\in\R_{2}$, $x_{2}\neq 0$, the function $f(\cdot,x_{2})$ has only one global minimum.
\item
All assignment functions $G_{1} \in\A_{1}(f_{1})$ are not Turing computable.
\end{enumerate}
\end{theorem}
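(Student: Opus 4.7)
The plan is to construct an $f_1$ whose $x_1$-slice minimizer jumps from $-1$ to $+1$ as $x_2$ passes through $0$, so that choosing a selection $G_1 \in \A_1(f_1)$ amounts to computing a sign function, at which point \Cref{lem:Galpha} supplies the non-computability.

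Concretely, I would fix computable constants $C, c \in \RNc$ with $C > b$ and $c > 1$ (for definiteness, $C = 2b$ and $c = 2$) and define
\begin{equation*}
f_1(x_1, x_2) = \max\bigl\{\, {-x_1 x_2},\ {-C(x_1 + 1) + x_2},\ {C(x_1 - 1) - x_2}\,\bigr\} + c\,|x_2|.
\end{equation*}
This is a computable continuous function on $\R$. The pairwise intersections of the three affine terms inside the max occur at $x_1 = -1$ and $x_1 = 1$ independently of $x_2$, and for $x_2 \in [-b,b]$ the three slopes in $x_1$ satisfy $-C < -x_2 < C$. Hence $f_1(\cdot, x_2)$ is convex and piecewise linear in $x_1$ with three pieces, giving property~(1). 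For fixed $x_1$, a short case analysis shows that exactly one of the three affine terms realizes the max throughout $[-b,b]$ (the middle one if $|x_1| \leq 1$, the right one if $x_1 > 1$, the left one if $x_1 < -1$), so $f_1(x_1, \cdot)$ equals an affine function of $x_2$ plus $c|x_2|$, i.e.\ a convex piecewise linear map with exactly two pieces joined at $x_2 = 0$; this is~(2).

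Next I would locate the minimizers. A direct computation gives $\min_{x_1 \in \R_1} f_1(x_1, x_2) = (c-1)|x_2|$, attained uniquely at $x_1 = +1$ if $x_2 > 0$, uniquely at $x_1 = -1$ if $x_2 < 0$, and on all of $[-1,1]$ if $x_2 = 0$; this is property~(4). Since $(c-1)|x_2| \geq 0$ with equality iff $x_2 = 0$, $f_1 \geq 0$ with equality precisely on the convex set $[-1,1] \times \{0\}$, and a routine check shows that at every point with $f_1 > 0$ one of the two coordinates admits a descent direction toward $[-1,1] \times \{0\}$, so $f_1$ has no local minima outside the global minimizer set, which proves~(3). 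For~(5), any $G_1 \in \A_1(f_1)$ must satisfy $G_1(x_2) = +1$ for $x_2 > 0$, $G_1(x_2) = -1$ for $x_2 < 0$, and $G_1(0) = \beta$ for some $\beta \in [-1,1]$. Setting $\alpha = -\beta$, the map $-G_1$ restricted to $[-b,b]$ coincides with the function $G_\alpha$ of \Cref{lem:Galpha}, which is not Banach--Mazur (hence not Borel--Turing) computable. Therefore $G_1$ cannot be Turing computable, irrespective of the choice of $\beta$.

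The only subtle part of the plan is property~(3). A naive construction that merely forces the slice minimizers to $\pm 1$ (for example the same piecewise linear function without the $c|x_2|$ term, or with $c=1$) produces a slice minimum of $0$ or $-|x_2|$, and the corresponding set of global minimizers of $f_1$ turns out to be an $L$- or $X$-shape that is not convex. The factor $c > 1$ in front of $|x_2|$ is exactly what is needed: it shifts the slice minimum value to $(c-1)|x_2|$, which is strictly positive for $x_2 \neq 0$, collapsing the global minimizer set of $f_1$ onto the convex segment $[-1,1] \times \{0\}$ while leaving the per-slice $\arg\min$ structure, and thus the sign-type discontinuity that forces non-computability, unchanged.
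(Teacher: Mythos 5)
Your proposal is correct and follows essentially the same strategy as the paper's proof: construct an explicit piecewise linear function whose $x_1$-slice minimizer jumps between $+1$ and $-1$ as $x_2$ crosses $0$ (with the whole segment $[-1,1]\times\{0\}$ as the global minimizer set), verify the slice convexity and minimizer structure directly, and reduce the non-computability of every $G_1\in\A_{1}(f_1)$ to \Cref{lem:Galpha}. The only differences are cosmetic — you write $f_1$ as a max of three affine functions plus $c|x_2|$ instead of the paper's case-by-case formula, and your sign convention for $G_1$ is flipped — and your verification of each of the five properties matches the paper's.
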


\begin{remark}
The assumption $a>1$ is not a restriction of the generality.
For $a < 1$ the construction in the subsequent proof has to be adapted in an obvious way.
\end{remark}

\begin{remark}
The function $f_{1}$, constructed in \Cref{thm:MainThm2D}, has very good properties. In particular, for every fixed $x_{1} \in \RNc$, $f_{1}(x_{1},\cdot)$ is a computable continuous, convex function, and for every fixed $x_{2}\in \RNc$, $f_{1}(\cdot,x_{2})$ is a computable continuous, convex function.
Moreover, \Cref{thm:MainThm2D} shows that $f_{1}$ has only global minimizers which all lie inside $\R$. It has no further local minimizers in $\R$ or even in $\RN^{2}$.
This implies that for $f_{1}$ the points of convergence of the algorithms from \cite{GrippoGaussSeidel2000,XuYin_SIAM_JIS13} are not only critical points according to \eqref{equ:defCriticalPoint} but they are the global minimizers of $f_{1}$.
Consequently \Cref{thm:MainThm2D} implies that the proposed algorithm from \cite{GrippoGaussSeidel2000,XuYin_SIAM_JIS13} cannot be implemented algorithmically on digital hardware (i.e. on a Turing machine) for the function $f_{1}$.
This is because the $\arg\min$-operators from \cite{GrippoGaussSeidel2000,XuYin_SIAM_JIS13}, which correspond to our assignment functions of $f_{1}$ (cf. \Cref{def:AssignFkt}), are not Turing computable.
Nevertheless, the subsequent proof will show that the assignment functions of $f_{1}$ (i.e. the $\arg\min$-operators) are well defined and fairly simple functions, namely step functions.
They are just not Turing computable.
\end{remark}

\begin{remark}
\label{rem:Glattheit_f1}
By a simple change in the construction of $f_{1}$ in the subsequent proof of \Cref{thm:MainThm2D},
it is possible to replace the piecewise linear function $f_{1}$ in \Cref{thm:MainThm2D} by a function $f_{1} \in \C^{K}(\RN)$, where $K\in\NN$ is arbitrary and where all partial derivatives $\frac{\partial^{\ell + k} f_{1}}{\partial x^{\ell}_{1} \partial x^{k}_{2}}$ with $\ell + k \leq K$ are computable continuous functions.
\end{remark}

As a consequence of \Cref{thm:MainThm2D},
one immediately obtains that the first optimization step, which should find the local minimum with respect to the first coordinate of $\bx$, cannot be solved algorithmically.

\begin{corollary}
\label{cor:Func1}
Let $f_{1} : \R_{1}\times\R_{2} \to \RN$ be the computable continuous function of \Cref{thm:MainThm2D}.
Then the optimization step
\begin{equation*}
	x^{(k+1)}_{1}
	= \arg\min_{y\in\R_{1}} f_{1}(y,x^{(k)}_{2})\,,
	\qquad k\in\NN\,.
\end{equation*}
cannot be solved algorithmically, i.e. there exists no Turing machine that is able to compute $x_{1}^{(k+1)}$ on input $x^{(k)}_{2}$.
\end{corollary}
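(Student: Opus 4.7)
The plan is to derive the corollary directly from property~(5) of \Cref{thm:MainThm2D} by a short contrapositive argument. I would assume, for contradiction, that such a Turing machine $\TM_{\mathrm{arg}}$ exists: given any representation of an input $x_{2}\in\R_{2}$, it outputs a representation of some value $y^{*}(x_{2})\in\R_{1}$ satisfying
\begin{equation*}
	f_{1}\bigl(y^{*}(x_{2}),x_{2}\bigr) = \min_{y\in\R_{1}} f_{1}(y,x_{2})\,.
\end{equation*}

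I would then define a candidate assignment function $G_{1} : \R_{2} \to \R_{1}$ by $G_{1}(x_{2}) := y^{*}(x_{2})$, i.e., the value that $\TM_{\mathrm{arg}}$ produces on input $x_{2}$. By the assumption on $\TM_{\mathrm{arg}}$, the value $G_{1}(x_{2})$ always lies in the local-minimizer set $\M_{1}(x_{2})$ defined in \eqref{equ:set_Minl}, so $G_{1}$ is an element of $\A_{1}(f_{1})$ in the sense of \Cref{def:AssignFkt}. Moreover, the very existence of a Turing machine transforming representations of $x_{2}$ into representations of $G_{1}(x_{2})$ exhibits $G_{1}$ as a Borel--Turing computable function from $\R_{2}$ to $\R_{1}$, contradicting property~(5) of \Cref{thm:MainThm2D}, which asserts that \emph{every} assignment function in $\A_{1}(f_{1})$ fails to be Turing computable.

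The only subtle point, which I would flag but not belabor, is the non-uniqueness of the $\arg\min$ at $x_{2}=0$: by properties~(3) and~(4) of \Cref{thm:MainThm2D}, the set $\M_{1}(x_{2})$ is a singleton for every $x_{2}\neq 0$ but may contain more than one element at $x_{2}=0$. Any deterministic Turing machine $\TM_{\mathrm{arg}}$ must nevertheless output some specific element of $\M_{1}(0)$ on input $0$, and this choice simply singles out a particular $G_{1}\in\A_{1}(f_{1})$; since \Cref{thm:MainThm2D}(5) rules out every such choice, the contradiction is unaffected. I do not expect any genuine obstacle here: the corollary is essentially a reformulation of \Cref{thm:MainThm2D}(5) in the algorithmic language of the iterative step, and the entire argument should fit in a few lines.
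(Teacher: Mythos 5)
Your argument is correct and is essentially identical to the paper's own proof: both assume such a Turing machine exists, observe that its input--output behavior defines a particular assignment function $G_{1}=G_{1,\alpha}$ (with $\alpha$ the machine's output at $x_{2}=0$), and conclude that this $G_{1}$ would be Borel--Turing computable, contradicting property~(5) of \Cref{thm:MainThm2D}. Your extra remark on the non-uniqueness at $x_{2}=0$ is exactly the point the paper handles by setting $\alpha = x_{1}$, so nothing is missing.
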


\begin{remark}
Note that \Cref{cor:Func1} answers \Cref{question:Implement_argMin} negatively.
\end{remark}

\begin{proof}[Proof of \Cref{thm:MainThm2D}]
We explicitly define the function $f_{1}$ on the whole plane $\RN\times \RN$ 
as follows:\\
For all $x_{2} \leq 0$ by
\begin{equation*}
	f(x_{1},x_{2})
	= \left\{\begin{array}{lll}
	\left(\tfrac{x_{2}}{2}-1\right) x_{1} - \left(\tfrac{3}{2}x_{2} + 1\right) & : & x_{1} < -1\\[1ex]
	\tfrac{1}{2} \left( x_{2} x_{1} - 3 x_{2}\right) & : & \left|x_{1}\right| \leq 1\\[1ex]
	\left(1 - \tfrac{x_{2}}{2}\right) x_{1} - \left(\tfrac{1}{2}x_{2} + 1\right) & : & x_{1} > 1
	\end{array}\right. ,
\end{equation*}
and for all $x_{2} \geq 0$ by
\begin{equation*}
	f(x_{1},x_{2})
	= \left\{\begin{array}{lll}
	-\left(1 + \tfrac{x_{2}}{2}\right) x_{1} + \left(\tfrac{1}{2}x_{2} - 1\right) & : & x_{1} < -1\\[1ex]
	\tfrac{1}{2} \left( x_{2}x_{1} + 3 x_{2} \right) & : & \left|x_{1}\right| \leq 1\\[1ex]
	\left(1 + \tfrac{x_{2}}{2}\right) x_{1} + \left(\tfrac{3}{2}x_{2} - 1\right) & : & x_{1} > 1
	\end{array}\right. .
\end{equation*}
For illustration purposes, the so defined function $f_{1}$ is shown in \Cref{fig:ProofThm1Plot1}. 

Properties~1) and 2) are immediately clear from the previous definition of $f_{1}$.
This definition shows in particular:
\begin{enumerate}
\item[i)] $f_{1}$ is a computable continuous function on $\R$.
\item[ii)] $f_{1}(x_{1},x_{2}) \geq 0$ for all $(x_{1},x_{2}) \in \R$.
\item[iii)] For every $x_{2}\in\R_{2}$, $x_{2}\neq 0$ we have $f_{1}(x_{1},x_{2}) > 0$ for all $x_{1}\in\R_{1}$. 
\item[iv)] For every $x_{2}\in\R_{2}$, $x_{2}\neq 0$ holds
\begin{equation}
\label{equ:Proof_Inequ1}
	f_{1}(x_{1},x_{2}) > f_{1}(x_{1},0)\,,
	\quad \text{for all}\ x_{1}\in\R_{1}\,.
\end{equation}
\end{enumerate}
Points ii)-iv) imply that all global minima lie on the line $x_{2} = 0$, 
that $f_{1}$ has no local minimum that is not a global minimum, and 
\begin{equation*}
	\M_{\R}(f_{1}) = \big\{ (x_{1},0) \in \R\ :\ x_{1}\in [-1,1] \big\}\,.
\end{equation*}
This set is certainly convex, which proves Property 3).
Next, we determine the sets \eqref{equ:set_Minl} of local minimizers for the function $f_{1}$.
To this end, we notice that for a fixed $x_{2} > 0$, the function $f_{1}(\cdot,x_{2})$ is monotonically decreasing on $[-a,-1]$ and monotonically increasing on $[-1,a)$.
For $x_{2} < 0$, the function $f_{1}(\cdot,x_{2})$ is monotonically decreasing on $[-a,1]$ and monotonically increasing on $[1,a)$.
For $x_{2} = 0$, $f_{1}(\cdot,x_{2})$ is monotonically decreasing on $[-a,-1]$, equal to zero on $[-1,1]$, and monotonically increasing on $[1,a)$.
All this implies that 
\begin{equation*}
	\M_{1}(x_{2}) = \left\{\begin{array}{lll}
	\left\{ 1 \right\} & : & x_{2} < 0\\
	\left\{ x\in\RN : |x|\leq 1\right\} & : & x_{2} = 0\\
	\left\{ -1 \right\} & : & x_{2} > 0
	\end{array}\right.\,.
\end{equation*}
Moreover, \eqref{equ:Proof_Inequ1} implies immediately $\M_{2}(x_{1}) = \left\{ 0\right\}$ for all $x_{1}\in\R_{1}$.
Therewith, we can determine the corresponding assignment functions $G_{\ell}$, defined by \eqref{equ:DefOfGl}.
For $G_1$, we have
\begin{equation}
\label{equ:G1proof}
	G_{1}(x_{2})
	= G_{1,\alpha}(x_{2}) = \left\{\begin{array}{lll}
	1 & : & x_{2} \in (-\infty,0)\\
	\alpha & : & x_{2} = 0\\
	-1 & : & x_{2} \in (0,\infty)
	\end{array}\right.
\end{equation}
for some arbitrary $\alpha\in [-1,1]$.
This means that for every $\alpha \in [-1,-1]$ there is a different function $G_{1,\alpha}$ and all these functions differ only by their value at $x_{2} = 0$.
So the set of assignment functions of $f_{1}$ for the first coordinate
\begin{equation}
\label{equ:Set_A1}
	\mathcal{A}_{1}(f_{1}) = \big\{ G_{1,\alpha}\ :\ \alpha\in\RN, \alpha\in [-1,1] \big\}
\end{equation}
contains uncountably many functions.
For the second coordinate, there exists only one assignment function, given by
$G_{2}(x_{1}) = 0$ for all $x_{1}\in\R_{1}$.

All functions in $\A_{1}(f_{1})$ are step functions and so \Cref{lem:Galpha} implies that every $G_{1,\alpha}$, $\alpha\in [-1,1]$ is not a Borel--Turing computable functions.
This finally proves Property 5).
\end{proof}

\begin{figure}[t]
\centering
\subfloat{\includegraphics[scale=0.40]{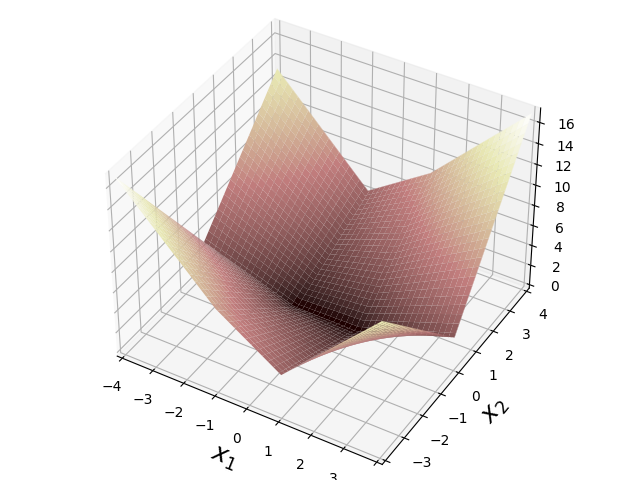}}
\subfloat{\includegraphics[scale=0.40]{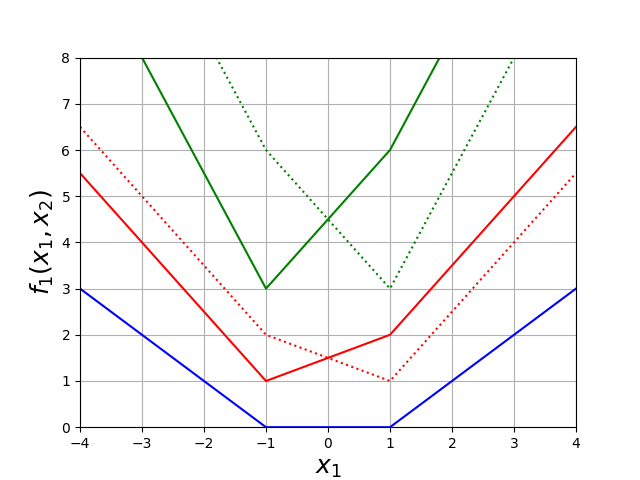}}
\caption{Illustration of the function $f_{1}$ constructed in the proof of \Cref{thm:MainThm2D}. The plot on the right shows the function $f_{1}(x_{1},x_{2})$ (solid line) and $f_{1}(x_{1},-x_{2})$ (dotted line) for fixed $x_{2} = 0$ (blue), $x_{2} = \pm 1.0$ (red), and $x_{2} = \pm 3.0$ (green). }
\label{fig:ProofThm1Plot1}
\end{figure}

\begin{proof}[Proof of \Cref{cor:Func1}]
Contrary to the statement of \Cref{cor:Func1}, assume that there exists a Turing machine $\TM$ that solves the following problem for any arbitrary $x_{2} \in\RNc\cap\R_{2}$ and for a computable $x_{1} \in \M_{1}(x_{2})$:
For every representation of $x_{2}$, $\TM$ determines a representation of $x_{1}$.
This would imply that the function $G_{1} = G_{1,\alpha}$ with $\alpha = x_{1} \in [-1,1]$ is Borel--Turing computable which contradicts the statement of \Cref{thm:MainThm2D}.
\end{proof}

\begin{remark}
By our construction of $f_{1}$ all global minimizers of \eqref{equ:Min2D} lie inside the rectangle $\R = [-a,a] \times [-b,b]$.
This implies (cf., e.g., \cite{GrippoGaussSeidel2000}) that all critical points of Problem~\eqref{equ:Min2D},
are exactly the global minimizers of \eqref{equ:Min2D}.
Then the results of \cite{GrippoGaussSeidel2000} imply that every arbitrary sequence $\left\{ \widetilde{\bx}_{n} \right\}_{n\in\NN}$ of local minimizers converges to a critical point of \eqref{equ:Min2D}.
However, according to \Cref{cor:Func1}, already the first iterative steps of Algorithm~\ref{alg:CoordinateWise2} cannot effectively be solved on a digital computer,
because the corresponding assignment functions are all not computable.
\end{remark}

\subsection{Convergence of the iterative procedure for $f_{1}$}
\label{sec:Convergence}

This subsection will show that if the assignment functions $G_{1}$ and $G_{2}$ would be computable in each step, then the iterative procedure (Algorithm~\ref{alg:CoordinateWise2}) would indeed converges to a global minimizer of $f_{1}$. In fact, we will see that the iterative procedure converges for any initialization vector $\bx^{(0)} = (x^{(0)}_{1}, x^{(0)}_{2}) \in \RN^{2}$ in at most two steps.

Let $f_{1} : \RN\times\RN \to \RN$ be given as in \Cref{thm:MainThm2D}, 
$G_{1} \in \A_{1}(f_{1})$ be an arbitrary assignment function from the set \eqref{equ:Set_A1},
and $G_{2}$ be the unique assignment function from set $\A_{2}(f_{1})$, given by $G_{2}(x_{1}) = 0$ for all $x_{1}\in\R_{1}$ (cf. the proof of \Cref{thm:MainThm2D}).
If $\bx^{(0)} = (x^{(0)}_{1},x^{(0)}_{2})\in\R_{1}\times\R_{2}$ is an arbitrary initialization vector.
Then the iterative procedure \eqref{equ:ItarativLocalOpt} computes for $k=0,1,2,\dots$ 
\begin{equation*}
	x^{(k+1)}_{1} = G_{1}\big(x^{(k)}_{2}\big)
	\qquad\text{and}\qquad
	x^{(k+1)}_{2} = G_{2}\big(x^{(k)}\big)\,.
\end{equation*}
We distinguish three different cases for the initial vector:

I) For $x^{(0)}_{2} > 0$, the iterative procedure computes successively:
\begin{align*}
	&x^{(1)}_{1} = G_{1}(x^{(0)}_{2}) = -1\,, &x^{(1)}_{2} = G_{2}(x^{(1)}_{1}) = 0\\
	&x^{(2)}_{1} = G_{1}(x^{(1)}_{2}) = G_{1}(0)\,, &x^{(2)}_{2} = G_{2}(x^{(2)}_{1}) = 0\\
	&x^{(3)}_{1} = G_{1}(x^{(2)}_{2}) = G_{1}(0)\,, &x^{(3)}_{2} = G_{2}(x^{(3)}_{1}) = 0\\
	& \qquad\vdots
\end{align*}
So the procedure converges after at most $2$ steps, no matter which particular $G_{1} = G_{1,\alpha}$, $\alpha\in [-1,-1]$ was chosen. 
For $\alpha = -1$, we even have $G_{1}(0) = -1$, and so the procedure already converges after the first step.

II) For $x^{(0)}_{2} < 0$, the iterative procedure gives
\begin{align*}
	&x^{(1)}_{1} = G_{1}(x^{(0)}_{2}) = 1\,, &x^{(1)}_{2} = G_{2}(x^{(1)}_{1}) = 0\\
	&x^{(2)}_{1} = G_{1}(x^{(1)}_{2}) = G_{1}(0)\,, &x^{(2)}_{2} = G_{2}(x^{(2)}_{1}) = 0\\
	&x^{(3)}_{1} = G_{1}(x^{(2)}_{2}) = G_{1}(0)\,, &x^{(3)}_{2} = G_{2}(x^{(3)}_{1}) = 0\\
	& \qquad\vdots
\end{align*}
Thus, the algorithm converges after at most $2$ step, and for $\alpha = 1$, i.e. for $G_{1}(0) = 1$, the algorithm converges already after the first step.

III) For $x^{(0)}_{2} = 0$, we simply get
\begin{align*}
	&x^{(1)}_{1} = G_{1}(x^{(0)}_{2}) = G_{1}(0)\,, &x^{(1)}_{2} = G_{2}(x^{(1)}_{1}) = 0\\
	&x^{(2)}_{1} = G_{1}(x^{(1)}_{2}) = G_{1}(0)\,, &x^{(2)}_{2} = G_{2}(x^{(2)}_{1}) = 0\\
	& \qquad\vdots
\end{align*}
Thus, the algorithm already converges after the first step.

\subsection{Approximation of the assignment function}
\label{sec:ApproximationAssFkt}

The main problem in solving the minimization problem \eqref{equ:Min2D} for the function $f_{1}$ given in \Cref{thm:MainThm2D} using the iterative coordinate-wise Algorithm~\ref{alg:CoordinateWise2} arises from the non-computability of the assignment functions $G_{1}\in \A_{1}(f_{1})$.
This raises the question of whether it is possible to replace the non-computable function by an appropriate computable function $\widetilde{G}_{1}$ that allows one to compute an approximation $\widetilde{x}^{(k+1)}_{1} = \widetilde{G}_{1}(x^{(k)}_{2})$ of $x^{(k+1)}_{1} = G_{1}(x^{(k)}_{2})$.

We will show that this is generally impossible.
In fact, even if we merely require that the error $\big| \widetilde{x}^{(k+1)}_{1} - x^{(k+1)}_{1} \big|$ is upper bounded by a fixed constant $\epsilon \leq \max(1,a)$ there exists no computable function $\widetilde{G}_{1}$ of $G_{1}$ that guarantees 
\begin{equation*}
	\left| \widetilde{x}^{(k+1)}_{1} - x^{(k+1)}_{1} \right|
	= \left|  \widetilde{G}_{1}(x^{(k)}_{2}) - G_{1}(x^{(k)}_{2}) \right|
	< \epsilon
\end{equation*}   
at every iteration step $k$.                
Of course, if it is not possible to guarantee a fixed error bound then it is \emph{a fortiori} not possible to satisfy Turing's requirement of an effective approximation that satisfies any arbitrary small approximation error.

The following theorem proves that the described approximation of the assignment function is not possible.

\begin{theorem}
Let $a,b\in\RNc$, $a,b > 0$ and consider the minimization problem on the rectangle $\R = [-a,a]\times [-b,b]$ for the function $f_{1}$, constructed in \Cref{thm:MainThm2D}.
Let $G_{1}\in\A_{1}(f_{1})$ be an arbitrary assignment function \eqref{equ:G1proof} and let $G : [-b,b] \to\RN$ be a function that satisfies
\begin{equation*}	
	\sup_{x_{2}\in [-b,b]} \left| G_{1}(x_{2}) - G(x_{2}) \right|
	< \min(1,a)\,.
\end{equation*}
Then $G$ is not Turing computable.
\end{theorem}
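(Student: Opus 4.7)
The plan is to exploit the strict inequality in the hypothesis to obtain a uniform positive gap between $G(x_2)$ and zero for every nonzero $x_2 \in [-b,b]$, so that $G$ detects the sign of $x_2$ uniformly on that set; then I would transfer the non-computability of the sign function supplied by \Cref{lem:Galpha} to $G$ itself.

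First I would set $s := \sup_{x_2 \in [-b,b]} |G_1(x_2) - G(x_2)|$. Since $s < \min(1,a) \le 1$, the quantity $\eta := 1 - s$ is strictly positive. Using the explicit form \eqref{equ:G1proof} of $G_1$, for every $x_2 > 0$ I have $G_1(x_2) = -1$, so $G(x_2) \le -1 + s = -\eta$; symmetrically $G(x_2) \ge \eta$ for every $x_2 < 0$. Hence $|G(x_2)| \ge \eta$ and $\mathrm{sgn}(G(x_2)) = -\mathrm{sgn}(x_2)$ for all $x_2 \in [-b,b] \setminus \{0\}$.

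Next I would argue by contradiction. Assume $G$ is (Borel--)Turing computable; then in particular $G$ is Banach--Mazur computable, so for every computable sequence $\{x_n\}_{n\in\NN} \subset [-b,b]$ the image $\{G(x_n)\}_{n\in\NN}$ is a computable sequence of reals, witnessed by a recursive double array $\{r_{n,k}\}_{n,k\in\NN} \subset \QN$ with $|r_{n,k} - G(x_n)| < 2^{-k}$. Choosing any $k_0 \in \NN$ with $2^{-k_0} < \eta/2$, whenever $x_n \ne 0$ the rational $r_{n,k_0}$ sits within $\eta/2$ of a real of modulus at least $\eta$, so $\mathrm{sgn}(r_{n,k_0}) = \mathrm{sgn}(G(x_n)) = -\mathrm{sgn}(x_n)$. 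Thus the map $n \mapsto \mathrm{sgn}(x_n)$ would be recursive for every computable sequence of nonzero reals in $[-b,b]$.

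To close the argument I would appeal to the construction underlying the proof of \Cref{lem:Galpha} (itself imported from \cite{BSP_TIT23}) to exhibit a computable sequence $\{x_n^{\ast}\}_{n\in\NN} \subset [-b,b] \setminus \{0\}$ whose sign sequence $\{\mathrm{sgn}(x_n^{\ast})\}_{n\in\NN}$ is not recursive; the standard device is to fix a small positive computable magnitude $\beta \in (0, b]$ and to let the sign of $x_n^{\ast} = \pm \beta$ encode whether the $n$-th Turing machine halts, so that each $x_n^{\ast}$ is individually computable while the collection of signs is not uniformly decidable. This contradicts the previous paragraph and forces $G$ to fail Turing computability. The bulk of the proof is the easy uniform-gap computation; the only nontrivial ingredient is this halting-problem construction, and I would reuse it from the proof of \Cref{lem:Galpha} rather than rebuild it.
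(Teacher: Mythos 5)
Your uniform-gap computation in the first two paragraphs is sound (for $a\geq 1$; for $a<1$ the assignment function takes the values $\pm a$ rather than $\pm1$, so \eqref{equ:G1proof} must be modified and the gap becomes $\eta=a-s$ --- the paper treats this as a separate case), and it matches the first half of the paper's own argument, which likewise shows that any admissible $G$ satisfies $G(x_{2})\geq\eta$ for $x_{2}<0$ and $G(x_{2})\leq-\eta$ for $x_{2}>0$. The final step, however, has a genuine gap. First, the concrete sequence you propose, $x_{n}^{\ast}=\pm\beta$ with a \emph{fixed} $\beta>0$ whose sign encodes halting, is not a computable sequence in the sense needed to invoke Banach--Mazur computability: a recursive double array $r_{n,k}$ with $|x_{n}^{\ast}-r_{n,k}|<2^{-k}$ would, at any precision $2^{-k}<\beta$, reveal the sign of $x_{n}^{\ast}$ and hence decide the halting problem, so no such array exists and the sequence cannot be fed to $G$. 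Second, and more fundamentally, the object sought in your third paragraph does not exist: for \emph{any} computable sequence $\{x_{n}\}_{n\in\NN}$ of \emph{nonzero} reals the sign sequence is automatically recursive (given $n$, search for a $k$ with $|r_{n,k}|>2^{-k}$; the search terminates because $x_{n}\neq0$, and then $\mathrm{sgn}(x_{n})=\mathrm{sgn}(r_{n,k})$). So the restriction to $[-b,b]\setminus\{0\}$ built into your second paragraph can never produce a contradiction.

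The repair is to let the sequence touch the discontinuity point $0$, which is what the technique behind \Cref{lem:Galpha} (and the paper's appeal to it) actually does: take recursively inseparable r.e.\ sets $A,B\subset\NN$ and set $x_{n}=2^{-s}$ if $n$ enters $A$ at stage $s$, $x_{n}=-2^{-s}$ if $n$ enters $B$ at stage $s$, and $x_{n}=0$ otherwise; this \emph{is} a computable sequence. Then $G(x_{n})\leq-\eta$ for $n\in A$, $G(x_{n})\geq\eta$ for $n\in B$, and $G(x_{n})=G(0)$ (one fixed real) otherwise, and thresholding a rational approximation of $G(x_{n})$ of precision $\eta/4$ against $-\eta/2$ or against $\eta/2$ (according to whether $G(0)\geq0$ or $G(0)<0$) yields a recursive set separating $A$ from $B$, the desired contradiction. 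Your write-up omits exactly this construction, which is the only nontrivial ingredient of the proof; the paper compresses it into the observation that $G$ is discontinuous at $0$ together with a citation of the argument for \Cref{lem:Galpha}.
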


This theorem shows that it is not possible to approximate a non-computable assignment function by a computable function such that the maximum error is less than $a$ (if $a \leq 1$) or less than $1$ (if $a > 1$).

\begin{proof}
Let $G_{1} : [-b,b] \to [-a,a]$ and $G_{2} : [-a,a]\to [-b,b]$ be assignment functions of $f_{1}$ as derived in the proof of \Cref{thm:MainThm2D}.
More precisely, $G_{1}$ is given by \eqref{equ:G1proof} and $G_{2}$ is simply the zero function: $G_{2}(x_{1}) = 0$ for all $x_{1}\in [-a,a]$.

First we consider the case $a\geq 1$.
Assume that there exists an assignment function $G_{1}\in\A_{1}(f_{1})$ such that there is a Turing computable function $G_{*}$ that satisfies
\begin{equation*}
	\sup_{x_{2}\in [-b,b]} \big| G_{1}(x_{2}) - G_{*}(x_{2}) \big| = \gamma < 1\,.
\end{equation*}
Then for all $x_{2} \in [-b,0)$, we have $\left| 1 - G_{*}(x_{2}) \right| \leq \gamma$ which implies that $G_{*}(x_{2}) \geq 1-\gamma =: \delta >0$.
Similarly, for all $x_{2} \in (0,b]$, we have $\left| -1 - G_{*}(x_{2}) \right| = \left| 1 + G_{*}(x_{2}) \right| \leq \gamma$ which implies $G_{*}(x_{2}) \leq \gamma - 1 = -\delta < 0$.
So $G_{*}(x_{2})$ is discontinuous at $x_{2} = 0$.
Therefore, one can show (in the same way as in the proof of \Cref{lem:Galpha}) that $G_{*}$ is not Banach--Mazur computable and therefore also not Turing computable.

Second we consider the case $a < 0$. 
In this case, only the assignment function $G_{1}$ is slightly different from the case $a \geq 1$.
Indeed, for all $x_{2} < 0$, one has (cf. \Cref{fig:ProofThm1Plot1} for illustration)
$f_{1}(x_{1},x_{2}) \geq f_{1}(a,x_{2})$ for all $x_{1}\in [-a,a]$ and 
\begin{equation*}
	\min_{x_{1}\in [-a,a]} f_{1}(x_{1},x_{2}) = f_{1}(a,x_{2})\,.
\end{equation*}
Similarly, for all $x_{2} > 0$, one has $f_{1}(x_{1},x_{2}) \geq f_{1}(-a,x_{2})$ for all $x_{1}\in [-a,a]$ and
\begin{equation*}
	\min_{x_{1}\in [-a,a]} f_{1}(x_{1},x_{2}) = f_{1}(-a,x_{2})\,.
\end{equation*}
Moreover, every $x_{1} \in [-a,a]$ is a minimizer of the function $f_{1}(\cdot,0)$.
Consequently, the assignment function $G_{1}$ becomes
\begin{equation*}
	G_{1}(x_{2})
	= G_{1,\alpha}(x_{2}) = \left\{\begin{array}{rll}
	a & : & x_{2} \in [-b,0)\\
	\alpha & : & x_{2} = 0\\
	-a & : & x_{2} \in (0,b]
	\end{array}\right.
\end{equation*}
for some arbitrary $\alpha\in [-a,a]$.
Now, we can apply the same arguments as for $a < 0$ to show that $G_{1}$ cannot be approximated by a computable function.
\end{proof}

\section{Reachability of global minimizers}
\label{sec:Reachability}

We still consider the optimization problem \eqref{equ:globalMin}
on a computable rectangle $\R = \R_{1}\times\R_{2}$ with $\R_{1} = [-a,a]$ and $\R_{2} = [-b,b]$ for some positive $a,b\in\RNc$ and with a given function $f : \RN^{2} \to \RN$.
This section studies the behavior of the assignment functions $G_{1}$ and $G_{2}$ associated with the iterative optimization problem in more detail.
In particular, we consider the point sets
\begin{equation*}
	\left\{ ( G_{1}(x_{2}) , x_{2} ) \subset \RN^{2} : x_{2} \in \R_{2} \right\}
	\qquad\text{and}\qquad
	\left\{ ( x_{1} , G_{2}(x_{1}) ) \subset \RN^{2} : x_{1} \in \R_{1} \right\}\,.
\end{equation*}
By the \Cref{def:AssignFkt} of the assignment functions, we certainly have
\begin{align*}
		G_{1}(x_{2}) \in \M_{1}(x_{2}) &= \arg\min_{x_{1}\in\R_{1}} f(x_{1},x_{2})\quad\text{and}\\
		G_{2}(x_{1}) \in \M_{2}(x_{1}) &= \arg\min_{x_{2}\in\R_{2}} f(x_{1},x_{2})\,.
\end{align*}
Let $\widehat{\bx} = (\widehat{x}_{1} , \widehat{x}_{2}) \in \M_{\R}(f)$ be an arbitrary global minimizer of $f$.
We want to study the behavior of the points $(G_{1}(x_{2}) , x_{2})\in\RN^{2}$ as $x_{2}$ approaches $\widehat{x}_{2}$
and the behavior of the points $( x_{1} , G_{2}(x_{1}) )\in\RN^{2}$ as $x_{1}$ approaches $\widehat{x}_{1}$.
To this end, we define the sets 
\begin{equation*}
	\mathcal{G}^{+}_{1}(\widehat{x}_{2})
	= \left\{ \bx = \lim_{\substack{x_{2} \to \widehat{x}_{2}\\x_{2} > \widehat{x}_{2}}} \big(G_{1}(x_{2}) , x_{2} \big)\ :\ G_{1} \in \A_{1}(f) \right\}
\end{equation*}
and
\begin{equation*}
	\mathcal{G}^{-}_{1}(\widehat{x}_{2})
	= \left\{ \bx = \lim_{\substack{x_{2} \to \widehat{x}_{2}\\ x_{2} < \widehat{x}_{2}}} \big(G_{1}(x_{2}) , x_{2} \big)\ :\ G_{1} \in \A_{1}(f) \right\}\,,
\end{equation*}
i.e. the set of all limits of points $\left(G_{1}(x_{2}) , x_{2} \right) \in \RN^{2}$ as $x_{2}$ converges to $\widehat{x}_{2}$ from above and below, respectively.
Similarly, we define the sets
\begin{equation*}
	\mathcal{G}^{+}_{2}(\widehat{x}_{1})
	= \left\{ \bx = \lim_{\substack{x_{1} \to \widehat{x}_{1}\\x_{1} > \widehat{x}_{1}}} \big( x_{1}, G_{2}(x_{1}) \big)\ :\ G_{2} \in \A_{2}(f) \right\}
\end{equation*}
and
\begin{equation*}
	\mathcal{G}^{-}_{2}(\widehat{x}_{1})
	= \left\{ \bx = \lim_{\substack{x_{1} \to \widehat{x}_{1}\\ x_{1} < \widehat{x}_{1}}} \big(x_{1}, G_{2}(x_{1}) \big)\ :\ G_{2} \in \A_{2}(f) \right\}\,.
\end{equation*}
By these definitions, we have
\begin{equation*}
	\mathcal{G}^{\pm}_{1}(\widehat{x}_{2}) \subset \M_{\R}(f)
	\quad\text{and}\quad
	\mathcal{G}^{\pm}_{2}(\widehat{x}_{1}) \subset \M_{\R}(f)
\end{equation*}
and we notice that these inclusions are generally strict.
This motivates the following definition.

\begin{definition}[Reachability along coordinates]
\label{def:Reachability}
Let $\widehat{\bx} = (\widehat{x}_{1},\widehat{x}_{2}) \in \M_{\R}(f)$ be an arbitrary global minimizer of $f$.
We say that $\widehat{\bx}$ is \emph{reachable along the coordinate} $x_{2}$ if 
\begin{equation*}
	\widehat{\bx} \in \mathcal{G}^{+}_{1}(\widehat{x}_{2}) \cup \mathcal{G}^{-}_{1}(\widehat{x}_{2})\,,
\end{equation*}
and $\widehat{\bx}$ is \emph{reachable along the coordinate} $x_{1}$ if 
\begin{equation*}
	\widehat{\bx} \in \mathcal{G}^{+}_{2}(\widehat{x}_{1}) \cup \mathcal{G}^{-}_{2}(\widehat{x}_{1})\,.
\end{equation*}
\end{definition}

\begin{remark}
In other words $\widehat{\bx}\in (\widehat{x}_{1},\widehat{x}_{2})$ is reachable along the coordinate $x_{2}$ if there is a $G_1 \in \mathcal{A}_{1}(f)$ such that 
\begin{equation*}
	\lim_{\substack{x_{2} \to \widehat{x}_{2} \\ x_{2} \neq \widehat{x}_{2}}} \big(G_{1}(x_{2}) , x_{2} \big) \in \M_{\R}(f)\,.
\end{equation*}
\end{remark}

\begin{example}
We consider the function $f_{1}$, defined in \Cref{thm:MainThm2D} (cf. also \Cref{fig:ProofThm1Plot1}). 
The set of all global minimizers of $f_{1}$ is given by $\M_{\R}(f_{1}) = \left\{ (x_{1},0) : x_{1}\in [-1,1] \right\}$.\\ 
For all $x_{2} > 0$, we have $(G_{1}(x_{2}),x_{2}) = (-1,x_{2})$ and for all $x_{2} < 0$, we have $(G_{1}(x_{2}),x_{2}) = (1,x_{2})$, so that
\begin{equation*}
	\mathcal{G}^{+}_{1}(0) = (-1,0)
	\qquad\text{and}\qquad
	\mathcal{G}^{-}_{1}(0) = (1,0)\,.
\end{equation*}
Similarly, since $(x_{1},G_{2}(x_{1})) = (x_{1},0)$ for all $x_{1}\in\RN$,
we have $\mathcal{G}^{+}_{2}(x_{1}) = \mathcal{G}^{-}_{2}(x_{1}) = (x_{1},0)$
for every $x_{1} \in [-1,1]$.

Thus all points in $\M_{\R}(f_{1})$ are reachable along the coordinate $x_{1}$
but only the points $(-1,0)$ and $(1,0)$ in $\M_{\R}(f_{1})$ are reachable along the coordinate $x_{2}$.
\end{example}

The importance of \Cref{def:Reachability} stems from the observation that if a minimizer $\widehat{\bx} \in \M_{\R}(f)$ is not reachable along a certain coordinate, then the iterative coordinate-wise algorithm will not be able to compute this minimizer.
In such a case it might happen that even though the function $f$ has global minimizers that are Turing computable, the iterative coordinate-wise algorithm may not be able to compute them because they are not reachable along a certain coordinate.

The following theorem provides such an example, namely it gives a function $f_{2}$ such that all global optimizers of $f_{2}$ that are reachable along the coordinate $x_{2}$ are not Turing computable points in $\RN^{2}$.

\begin{theorem}
\label{thm:Rechability}
Let $a,b\in\RNc$ with $a,b > 0$ be arbitrary and let $\R = \R_{1} \times \R_{2}$ with $\R_{1} = [-a,a]$, $\R_{2} = [-b,b]$.
There exists a computable continuous function $f_{2} : \R \to \CN$ with the following properties
\begin{enumerate}
\item $f_{2} \in \mathcal{C}^{1}(\RN^{2})$.
\item The function $f_{2}(\cdot,x_{2})$ is strictly convex for every fixed $x_{2}\in\RN$, $x_{2}\neq 0$, and $f_{2}(\cdot,x_{2})$ is it a computable continuous function for every $x_{2}\in\RNc$.
\item The function $f_{2}(x_{1},\cdot)$ is strictly convex for every fixed $x_{1}\in\RN$, and $f_{2}(x_{1},\cdot)$ is a computable continuous function for every $x_{1}\in\RNc$.
\item $f_{2}$ has only global optimizers and the set of all global optimizers is a closed interval on the $x_{1}$-axis.
\item Every $(\widehat{x}_{1}, \widehat{x}_{2}) \in \M_{\R}(f_{2})$ that can be reached along the coordinate $x_{2}$ is not Turing computable in $\RN^{2}$.
\end{enumerate}
\end{theorem}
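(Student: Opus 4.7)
The plan is to implant a Specker-type non-computable real $\alpha$ as the common one-sided limit of the unique coordinate-wise minimizer of $f_2(\cdot,x_2)$ as $x_2 \to 0$, while arranging that $\M_{\R}(f_2)$ is a full, computable sub-interval of the $x_1$-axis. This produces many computable global minimizers, none of which are accessible along the $x_2$ direction: the only points reachable along $x_2$ are the limiting points $(\alpha,0)$, which are non-computable in $\RN^2$. As a first step I invoke the classical Specker construction to fix a computable, strictly monotone rational sequence $(q_n)_{n \in \NN} \subset (-c,c)$ with $0 < c < a$ whose limit $\alpha := \lim_n q_n$ belongs to $(-c,c)$ but is not Turing computable. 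The non-computability of $\alpha$ is exactly the non-effectiveness of the convergence of $(q_n)$ in the sense of Definition~2.1.

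Next I build the function $f_2$ in three pieces. Let $g : \RN \to \RN_{\geq 0}$ be a fixed computable $\C^1$ convex function with $g \equiv 0$ on $[-c,c]$ and strictly convex on $\RN \setminus [-c,c]$, e.g.\ $g(x_1) = (|x_1|-c)_{+}^{2}$. Let $\phi : [-b,b] \setminus \{0\} \to [-c,c]$ be a $\C^{\infty}$, even, computable interpolation with $\phi(1/n) = q_n$, chosen so that each derivative $\phi^{(k)}(x_2)$ grows at most polynomially in $1/|x_2|$ as $x_2 \to 0$ (this is possible because we may thin $(q_n)$ so that $|q_{n+1}-q_n|$ is summable sufficiently fast). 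Let $\omega : [-b,b] \to \RN_{\geq 0}$ be a computable $\C^{\infty}$ weight with $\omega(x_2) > 0$ for $x_2 \neq 0$, $\omega(0) = 0$, and $\omega$ together with all its derivatives vanishing faster than any polynomial as $x_2 \to 0$ (the canonical choice is $\omega(x_2) = \exp(-1/x_2^2)$ for $x_2 \neq 0$ and $\omega(0)=0$). Finally, for a sufficiently large computable $\lambda > 0$, I define
\begin{equation*}
	f_2(x_1,x_2) \;=\; g(x_1) \;+\; \lambda\, x_2^{2} \;+\; \omega(x_2)\bigl(x_1 - \phi(x_2)\bigr)^{2} \qquad (x_2 \neq 0),
\end{equation*}
and $f_2(x_1,0) := g(x_1)$.

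The verifications are: (a) Super-polynomial decay of $\omega$ at $0$ absorbs $\phi$ and all its derivatives, so $f_2 \in \C^{1}(\RN^2)$ and is a computable continuous function on $\R$; in particular each section $f_2(\cdot,x_2)$ and $f_2(x_1,\cdot)$ is a computable continuous function whenever the fixed coordinate is computable. (b) For $x_2 \neq 0$, $f_2(\cdot,x_2)$ is strictly convex in $x_1$ because $\omega(x_2)(x_1-\phi(x_2))^{2}$ is strictly convex; moreover the first-order condition combined with $g'(x_1) = 0$ on $[-c,c]$ forces the unique minimizer to be $x_1 = \phi(x_2)$, so $G_1(x_2) = \phi(x_2)$ for every $x_2 \neq 0$. (c) $\M_{\R}(f_2) = [-c,c] \times \{0\}$, a closed interval on the $x_1$-axis that contains many computable points. (d) The only candidates for points reachable along the $x_2$-coordinate are $\lim_{x_2 \to 0^{\pm}} (G_1(x_2),x_2) = (\alpha,0)$, and $(\alpha,0) \notin \RNc^{2}$ since $\alpha \notin \RNc$. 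The main obstacle I expect is verifying strict convexity of $f_2(x_1,\cdot)$ in $x_2$ uniformly in $x_1 \in [-a,a]$: the perturbation $\omega(x_2)(x_1-\phi(x_2))^{2}$ is far from convex in $x_2$ pointwise, and $\phi'$, $\phi''$ feed into $\partial^{2} f_2 / \partial x_2^{2}$. The resolution is that $\omega$, $\omega'$, and $\omega''$ all decay faster than any polynomial in $|x_2|$ on $[-b,b]$, so the combined term $|(\omega (x_1-\phi)^{2})''|$ is uniformly bounded by an arbitrarily small constant on $[-a,a] \times [-b,b]$ (rescale $\omega$ if necessary); then choosing $\lambda$ large enough makes $2\lambda$ dominate that perturbation and yields the required strict convexity.
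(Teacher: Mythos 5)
Your construction is correct, but it reaches the conclusion by a genuinely different mechanism than the paper's proof. The paper also starts from a Specker-type non-computable number $\xi_{*}$, but it hides $\xi_{*}$ in the $x_{1}$-profile itself: it builds a convex $\C^{1}$ function $g_{*}=\sum_{n}2^{-n}g_{n}$ over the monotone rational sequence whose flat bottom is exactly $[-\xi_{*},\xi_{*}]$, and perturbs it by $u(x_{1},x_{2})=x_{2}^{2}\,\E^{\pm\alpha x_{1}}$, which is strictly monotone in $x_{1}$ for $x_{2}\neq 0$. The coordinate-wise minimizer is then only implicitly defined, and the paper needs a monotonicity argument on the first-order condition $g_{*}'(x_{1})+\alpha x_{2}^{2}\E^{\alpha x_{1}}=0$ together with a continuity argument on $F_{1}(x_{2})=\min_{x_{1}}f_{2}(x_{1},x_{2})$ to conclude $G_{1}(x_{2})\to\mp\xi_{*}$ as $x_{2}\to 0^{\pm}$; consequently the set of global minimizers $[-\xi_{*},\xi_{*}]\times\{0\}$ has non-computable endpoints and two distinct reachable points $(\pm\xi_{*},0)$. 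You instead keep the flat bottom $[-c,c]$ computable and inject the non-computable $\alpha$ as the drifting center of the penalty $\omega(x_{2})(x_{1}-\phi(x_{2}))^{2}$, which makes $G_{1}(x_{2})=\phi(x_{2})$ explicit (a strictly convex $\C^{1}$ section with a critical point at $\phi(x_{2})\in[-c,c]$, where $g'$ vanishes, has that point as unique minimizer) and the one-sided limits immediate, yielding a single reachable point $(\alpha,0)$. The price is exactly the obstacle you name: you must control $\phi'$ and $\phi''$ (polynomial growth in $1/|x_{2}|$, which your node spacing $1/n-1/(n+1)\sim n^{-2}$ and the boundedness of $|q_{n+1}-q_{n}|$ do give) against the super-polynomially flat weight $\omega$ and its first two derivatives, both to secure $f_{2}\in\C^{1}(\RN^{2})$ and to let $2\lambda$ dominate the second $x_{2}$-derivative of the perturbation; your resolution is sound. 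One point you should make explicit in a full write-up: $\phi$ cannot be evaluated effectively near $x_{2}=0$ (its continuous extension there is the non-computable $\alpha$), so the Borel--Turing computability of $f_{2}$ must be argued by the standard dichotomy that either a rational approximation certifies $|x_{2}|\geq 2^{-N}$, in which case $\phi$ and $\omega$ are effectively evaluable, or $|x_{2}|$ is so small that the entire term $\omega(x_{2})(x_{1}-\phi(x_{2}))^{2}$ is provably below the target precision and may be replaced by $0$. This is not a gap, since your choice of $\omega$ supplies exactly the needed bound, and your version arguably sharpens the message of Property~4 by making the interval of global minimizers itself computable.
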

As an immediate consequence of the last statement of this theorem we obtain the following negative answer to \Cref{question:effectiveConv}.
\begin{corollary} 
\label{cor:f2}
Let $f_{2} : \RN\times \RN \to \RN$ be the function defined in (the proof of) \Cref{thm:Rechability} and let $\left\{ (x_{n}, \widehat{x}_{2}) \right\}_{n\in\NN}$ be an arbitrary sequence that converges to a global minimum $(\widehat{x}_{1},\widehat{x}_{2}) \in \M_{\R}(f_{2})$ of $f_{2}$, then this convergence cannot be effective.
\end{corollary}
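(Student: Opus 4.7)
The plan is to argue by contradiction via statement 5 of \Cref{thm:Rechability}. An important preliminary observation is that the conclusion ``the convergence cannot be effective'' has nontrivial algorithmic content only when the sequence $\{(x_{n},\widehat{x}_{2})\}_{n\in\NN}$ is understood to be a \emph{computable} sequence of computable vectors in $\RN^{2}$; otherwise the bound $\|(x_{n},\widehat{x}_{2}) - (\widehat{x}_{1},\widehat{x}_{2})\| \leq 2^{-n}$ is trivially attainable (e.g.\ by the constant sequence $x_{n} = \widehat{x}_{1}$). So I read the corollary as: no computable sequence of computable points of the form $\{(x_{n},\widehat{x}_{2})\}$ can converge to $(\widehat{x}_{1},\widehat{x}_{2})$ with an effective rate.

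First I would assume for contradiction that such a computable, effectively convergent sequence exists. Since every vector in a computable vector sequence has computable components, both $\widehat{x}_{2}$ and each $x_{n}$ lie in $\RNc$, and $\{x_{n}\}_{n\in\NN}$ is itself a computable sequence of computable reals. (By property 4 of \Cref{thm:Rechability} the minimizer satisfies $\widehat{x}_{2} = 0 \in \RNc$, so the computability of the fixed second coordinate is automatic.) Second, the effective rate assumption reduces to $|x_{n} - \widehat{x}_{1}| \leq 2^{-n}$ for every $n\in\NN$. By the standard fact of computability analysis that a computable sequence of computable reals with a computable (here explicit) modulus of convergence has a computable limit, we conclude $\widehat{x}_{1} \in \RNc$, and hence $(\widehat{x}_{1},\widehat{x}_{2}) \in \RNc^{2}$ is Turing computable in $\RN^{2}$.

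Finally, I would invoke statement 5 of \Cref{thm:Rechability}: the limit point $(\widehat{x}_{1},\widehat{x}_{2})$ is a global minimum of $f_{2}$ reachable along the coordinate $x_{2}$, and must therefore fail to be Turing computable in $\RN^{2}$. This directly contradicts the computability deduced in the previous step, so no effectively convergent computable sequence of the prescribed form can exist.

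The main obstacle, as I see it, is not the computability chain itself (which is a routine application of the ``effective limit is computable'' principle), but rather justifying why the corollary tacitly restricts attention to those global minima $(\widehat{x}_{1},\widehat{x}_{2})$ that are reachable along $x_{2}$. Since $\M_{\R}(f_{2})$ is a closed interval on the $x_{1}$-axis, it will in general contain computable points, for which trivially effectively convergent computable approximating sequences exist. The non-computability of \Cref{thm:Rechability}(5) applies only to the reachable part of $\M_{\R}(f_{2})$, and the corollary should be interpreted with this restriction in force; once this reading is adopted, the contradiction argument above goes through without further complication.
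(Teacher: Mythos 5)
Your proof is correct and follows the same route the paper intends: the paper gives no explicit proof of \Cref{cor:f2}, declaring it an immediate consequence of statement 5 of \Cref{thm:Rechability}, and your argument (effective convergence of a computable sequence of computable points forces a computable limit, contradicting the non-computability of the reachable minimizers $(\pm\xi_{*},0)$) is precisely the justification that makes that immediacy rigorous. Your observation that the corollary must be read as restricted to computable sequences and to minimizers reachable along the coordinate $x_{2}$ --- since $\M_{\R}(f_{2})$ contains computable points that admit trivial effectively convergent constant approximating sequences --- correctly identifies an imprecision in the statement as literally written rather than a gap in your own argument, and it matches the reading the authors adopt in their summary section.
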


\begin{proof}[Proof of \Cref{thm:Rechability}]
As in the proof of \Cref{thm:MainThm2D}, we define $f_{2}$ on the entire plane $\RN^{2}$ and restrict it later to the computable rectangle $\R \subset \RN^{2}$.
To this end, let $g_{*} \in \C^{1}(\RN)$ be a function as defined in \Cref{sec:AppAuxFunc} by \eqref{equ:gStar} based on a number $\xi_{*} \in (0,1)$ that is \emph{not computable}, i.e. $\xi_{*} \notin\RNc$.
We refer to \Cref{lem:Properties_gstar} for properties of $g_{*}$ and to \Cref{fig:FunctionG} for an illustration of such a function.
Based on $g_{*}$, we define the function $f_{2} : \RN^{2} \to \RN$  by
\begin{equation}
\label{equ:f2}
	f_{2}(x_{1},x_{2}) = g_{*}(x_{1}) + u(x_{1},x_{2})
\end{equation}
with $u : \RN^{2} \to \RN$ given by
\begin{equation*}
	u(x_{1},x_{2}) = \left\{\begin{array}{lcc}
		x^{2}_{2}\, \E^{-\alpha x_{1}} & : & x_{1}\in\RN\,, x_{2}<0\\[1ex]
		x^{2}_{2}\, \E^{\alpha x_{1}} & : & x_{1}\in\RN\,, x_{2}\geq 0
	\end{array}\right.
\end{equation*}
for an arbitrary positive $\alpha \in\RN$.
\Cref{fig:ProofThm2Plot1} illustrates the shape of the so defined function $f_{2}$. 
We will now verify the properties of $f_{2}$ claimed by the theorem:

1) We show that the partial derivatives of $f_{2}$ exist and are continuous on the plane $\RN\times \RN$.
To this end we write $f^{+}_{2}$ and $f^{-}_{2}$ for $f_{2}$ restricted to the upper and lower half plane, respectively, i.e.
\begin{equation*}
\begin{array}{ll}
	f^{+}_{2}(x_{1},x_{2}) = g_{*}(x_{1}) + x^{2}_{2}\, \E^{\alpha x_{1}}\,, &(x_{1},x_{2}) \in \RN\times\RN_{+}\\[0.7ex]
	f^{-}_{2}(x_{1},x_{2}) = g_{*}(x_{1}) + x^{2}_{2}\, \E^{-\alpha x_{1}}\,, &(x_{1},x_{2}) \in \RN\times\RN_{-}
\end{array}	
\end{equation*}
Since $g_{*}$ is continuously differentiable, it is easy to see that the partial derivatives of these two functions are continuous on the corresponding half planes and we only have to verify the continuity at $x_{2} = 0$.
To this end, we observe that for any arbitrary $A\in\NN$, one has
\begin{equation*}
	\lim_{x_{2}\to 0} \sup_{x_{1} \in [-A,A]} \left| \frac{\partial f^{+}_{2}}{\partial x_1}(x_{1},x_{2}) - \frac{\partial f^{+}_{2}}{\partial x_1}(x_{1},0) \right|
	= \lim_{x_{2}\to 0} \alpha\, x^{2}_{2}\, \E^{\alpha A} = 0
\end{equation*}
and the same result holds for $\frac{\partial f^{-}_{2}}{\partial x_{1}}$, showing that $\frac{\partial f_{2}}{\partial x_{1}} \in \C(\RN)$.
In exactly the same way, one shows that $\frac{\partial f_{2}}{\partial x_{2}} \in \C(\RN)$ which finally proves that $f_{2}\in\C^{1}(\RN^{2})$.

2) For a fixed $x_{2} \neq 0$, $f_{2}(\cdot,x_{2})$ is strictly convex, because it is the sum of the convex function $g_{*}$ (cf. \Cref{lem:Properties_gstar}) and of the strictly convex exponential function.
Moreover, since $g_{*}$ and the exponential function are computable continuous functions, also its sum $f_{2}(\cdot,x_{2})$ is a computable continuous function for $x_{1}\in\RNc$.

3) For an arbitrary but fixed $x_{1} \in \RN$ the function $f_{2}(x_{1},\cdot)$, has the form of a parabola for $x_{2} \geq 0$ and $x_{2} \leq 0$ and the function is continuous on $\RN$:
\begin{equation}
\label{equ:f2parabola}
	f_{2}(x_{1}, x_{2}) = g_{*}(x_{1}) + \E^{\pm\alpha x_{1}}\cdot x^{2}_{2}\,,
	\qquad x_{2}\in\RN\,,
\end{equation}
and where the sign in the exponential function depends on the sign of $x_{1}$.
Therefore $f(x_{1},\cdot)$ is strictly convex and a computable continuous function.

4) In view of \eqref{equ:f2parabola}, we see that the unique minimum with respect to $x_{2}$ is always attained at $x_{2} = 0$.
Moreover, by the properties of $g_{*}$ (cf. \Cref{lem:Properties_gstar}), we have
\begin{equation*}
	\begin{array}{lll}
	f_{2}(x_{1}, 0) = 0 & \text{for} & x_{1} \in [-\xi_{*},\xi_{*}]\\
	f_{2}(x_{1}, 0) > 0 & \text{for} & x_{1} \notin [-\xi_{*},\xi_{*}]
	\end{array}
\end{equation*}
where $\xi_{*}\in (0,1)$ is the number used to define the function $g_{*}$ (cf. \Cref{sec:AppAuxFunc}).
So the set of all minimizes of $f_{2}$ is given by
\begin{equation*}
	\M_{\R}(f_{2}) = \left\{ (x_{1},0) \in \RN^{2} : x_{1} \in [-\xi_{*},\xi_{*}] \right\}\,,
\end{equation*}
and all of these minimizers are global minimizers.

\begin{figure}[t]
	\centering
	\subfloat{\includegraphics[scale=0.4]{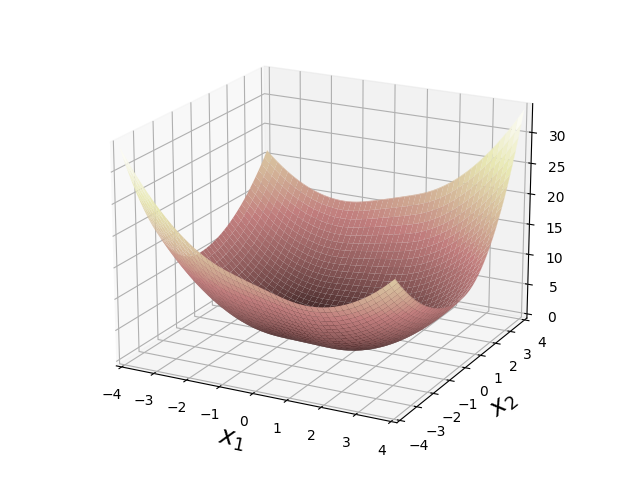}}
	\subfloat{\includegraphics[scale=0.4]{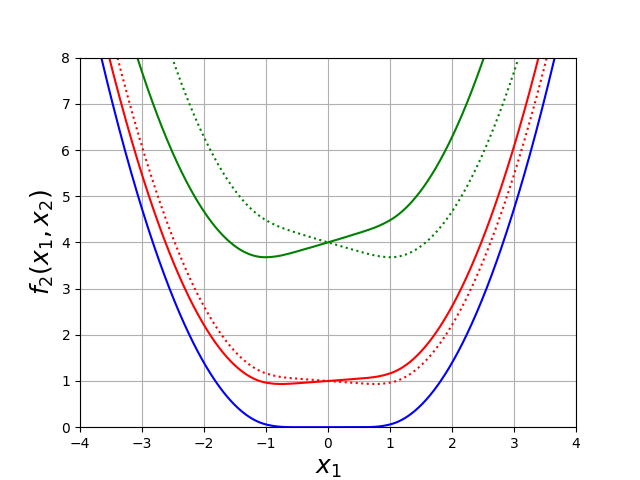}}	
	\caption{Illustration of the function $f_{2}$ defined in \eqref{equ:f2} with $\alpha = 0.1$ and using a $g_{*}$ based on $\xi_{*} = 1/2$ and with the sequence $\xi_{n} = \xi_{*} + 2^{-n}$ (cf. \Cref{sec:AppAuxFunc}).
	The plot on the right shows the function $f_{2}(x_{1},x_{2})$ (solid line) and $f_{2}(x_{1},-x_{2})$ (dotted line) for fixed $x_{2} = 0$ (blue), $x_{2} = \pm 1.0$ (red), and $x_{2} = \pm 2.0$ (green).}
	\label{fig:ProofThm2Plot1}
\end{figure}

5) One easily verifies that for every fixed $x_{2} \in \RN$, one has $\lim_{x_{1}\to\pm\infty} f_{2}(x_{1},x_{2}) = + \infty$.
Consequently, because $f_{2}(\cdot,x_{2})$ is strictly convex for all $x_{2}\neq 0$, the set of all local minimizers $\M_{1}(x_{2})$ (cf. \eqref{equ:set_Minl} for the definition of these sets) contains exactly one element $x_{1}$
for every $x_{2}\neq 0$.
This $x_{1}$ is the value of the assignment function $G_{1} \in \A_{1}(f_{2})$ at $x_{2}$, i.e. $x_{1} = G_{1}(x_{2})$.
So for $x_{2}\in\RNc$, $x_{2}\neq 0$ the function $f_{2}(\cdot,x_{2})$ is a continuous computable function that has a unique minimizer.
Therefore this minimizer is a computable number, i.e. $x_{1} = G_{1}(x_{2}) \in \RNc$ for every non-zero $x_{2}\in\RNc$ \cite[Chapter~$6$]{PourEl_Computability}.

Now, we fix $x_{2} > 0 $ and consider $f_{2}(x_{1},x_{2})$ for all values $x_{1} \geq -\xi_{*}$.
By the definition of $f_{2}$, one easily sees that $f_{2}(x_{1},x_{2})$ is strictly increasing for increasing $x_{1} \geq -\xi_{*}$.
Together with the observation that $\partial f_{2}/ \partial x_{1}$ is a continuous function on $\RN$, it follows that the minimizer of $f_{2}(x_{1},x_{2})$ with respect to $x_{1}$ has to be strictly smaller that $-\xi_{*}$, i.e. $G_{1}(x_{2}) < -\xi_{*}$ for all $x_{2} > 0$.
A similar argument for $x_{2} < 0$ gives finally
\begin{align}
\label{equ:G1:PlusMinus}
	\begin{array}{lll}
	G_{1}(x_{2}) > \xi_{*} > 0  & \text{for} & x_{2} < 0\\
	G_{1}(x_{2}) \in [-\xi_{*},\xi_{*}] & \text{for} & x=0\\
	G_{1}(x_{2}) < -\xi_{*} < 0 & \text{for} & x_{2} > 0
	\end{array}.
\end{align}
So we have the same situation as in the proof of \Cref{thm:MainThm2D}, i.e. we have (uncountably) many assignment functions $G_{1} \in \A_{1}(f_{2})$.
All of them are equal for $x_{2} \neq 0$ and they differ only by their value $G_{1}(0)$ which can be any number in the interval $[-\xi_{*},\xi_{*}]$.

Next we investigate the behavior of $G_{1}(x_{2})$ as $x_{2}$ converges to zero from above and from below, respectively.
First, we consider the case $x_2 > 0$ and note again that $G_{1}(x_{2})$ is the minimizer of $f_{2}(x_{1},x_{2})$ with respect to $x_{1}$ for a fixed $x_{2}$.
Thus $G_{1}(x_{2})$ is implicitly given by the equation
\begin{equation*}
	0= \frac{\partial f_{2}}{\partial x_{1}}(x_{1},x_{2})
	= g_{*}'(x_{1}) + \alpha\, x^{2}_{2}\, \E^{\alpha x_{1}}
	=: F(x_{1} , x_2)\,.
\end{equation*}
Now we choose two arbitrary points $\widetilde{x}_{2} > x_{2} > 0$ with the corresponding values $x_{1} = G_{1}(x_{2})$ and $\widetilde{x}_{1} = G_{1}(\widetilde{x}_{2})$, i.e. 
\begin{align*}
	F(x_{1},x_{2})
	&=  g'_{*}(x_{1}) + \alpha x^{2}_{2}\, \E^{\alpha x_{1}}
	= 0\\
	F(\widetilde{x}_{1},\widetilde{x}_{2})
	&= g'_{*}(\widetilde{x}_{1}) + \alpha \widetilde{x}^{2}_{2}\, \E^{\alpha \widetilde{x}_{1}}
	= 0\,.
\end{align*}
Setting the difference of these equations to zero, i.e. $F(\widetilde{x}_{1},\widetilde{x}_{2}) - F(x_{1},x_{2}) = 0$, yields
\begin{equation*}
	\frac{1}{\alpha}\big[ g'_{*}(x_{1}) - g'_{*}(\widetilde{x}_{1}) \big]
	= \widetilde{x}^{2}_{2}\, \E^{\alpha \widetilde{x}_{1}} - x^{2}_{2}\, \E^{\alpha x_{1}}\,.
\end{equation*}
Next we use a first order Taylor expansion to get a lower bound for the value $\E^{\alpha \widetilde{x}_{1}}$, i.e.
$\E^{\alpha \widetilde{x}_{1}} \geq \E^{\alpha x_{1}} + \alpha\E^{\alpha x_{1}}(\widetilde{x}_{1} - x_{1})$. Inserting this relation in the previous equation and rearranging gives
\begin{equation*}
	\widetilde{x}^{2}_{2} - x^{2}_{2}
	\leq \frac{1}{\alpha}\big[ g'_{*}(x_{1}) - g'_{*}(\widetilde{x}_{1}) \big]\, \E^{-\alpha x_{1}} + \alpha\, \widetilde{x}^{2}_{2} (x_{1} - \widetilde{x}_{1})
	\leq \big[ \tfrac{2}{\alpha} \E^{-\alpha x_{1}} + \alpha \widetilde{x}^{2}_{2}\big] \left(x_{1} - \widetilde{x}_{1} \right)\,.
\end{equation*}
The expression in the first brackets on the right hand side is always a positive number and so we see that
\begin{equation*}
	\widetilde{x}_{2} > x_{2}\quad \text{implies}\quad \widetilde{x}_{1} = G_{1}(\widetilde{x}_{2}) < G_{1}(x_{2}) = x_{1}\,,
\end{equation*}	
i.e. for any monotonically decreasing positive sequence $\big\{ x^{(k)}_{2} \big\}_{k\in\NN}$ that converges to zero, i.e. with 
\begin{equation*}
	0 < x^{(k+1)}_{2} < x^{(k)}_{2}\,,\ \text{for all}\ k\in\NN,
	\qquad\text{and}\qquad \lim_{k\to\infty} x^{(k)}_{2} = 0\,,
\end{equation*}
the sequence $G_{1}(x^{(k)}_{2})$ is monotonically increasing, and because of the last line of \eqref{equ:G1:PlusMinus}, we have that the limit
\begin{equation*}
	\lim_{x_{2}\to +0} G_{1}(x_{2}) = \widehat{x}_{1}(+0) \leq -\xi_{*}
\end{equation*}
exists. In fact the limit is equal to $-\xi_{*}$.
To see this, we consider the function $F_{1}(x_{2}) = \min_{x_{1}\in\RN} f_{2}(x_{1},x_{2})$, $x_{2}\in\RN$.
By the definition of $f_{2}$, it follows that $F_{1}$ is a continuous function on $\RN$ that satisfies $F_{1}(x_{2}) = f_{2}(G_{1}(x_{2}),x_{2})$ and with
\begin{equation}
\label{equ:limitF1}
	\lim_{x_{2}\to 0} F_{1}(x_{2}) = 0\,.
\end{equation}
Assume now that $\widehat{x}_{1}(+0) < -\xi_{*}$. Then the continuity of $F_{1}$ implies
\begin{equation*}
	\lim_{x_{2}\to +0} F_{1}(x_{2})
	= \!\!\lim_{x_{2} \to +0} f_{2}\big(G_{1}(x_{2}),x_{2}\big)
	= f_{2}\big(\widehat{x}_{1}(+0) ,x_{2}\big) > 0
\end{equation*}
which contradicts \eqref{equ:limitF1}, and so $\widehat{x}_{1}(+0) = -\xi_{*}$.
In the same way, one considers the case $x_{2} < 0$. Then one finally obtains
\begin{equation*}
	\lim_{x_{2}\to +0} G_{1}(x_{2}) = -\xi_{*}
	\qquad\text{and}\qquad
	\lim_{x_{2}\to -0} G_{1}(x_{2}) = \xi_{*}\,.
\end{equation*}
It is important to note that this holds for all (uncountably many) assignment functions $G_{1} \in \A_{1}(f_{2})$.
This shows that 
\begin{equation*}
	\mathcal{G}^{+}_{1}(0) = \left\{ (-\xi_{*},0) \right\}
	\qquad\text{and}\qquad
	\mathcal{G}^{1}_{1}(0) = \left\{ (\xi_{*},0) \right\}\,.
\end{equation*}	
i.e. each of these two sets contain only one point. So only these two points are reachable along the coordinate $x_{2}$.	
Since $\xi_{*}\notin \RNc$ this finishes the proof. 
\end{proof}

\begin{remark}
The function $f_{2}$ constructed in the previous proof is continuously differentiable. 
However, similarly as in \Cref{rem:Glattheit_f1}, we note that for every $K\in\NN$, 
one can constructs a function $f_{2}\in\mathcal{C}^{K}(\RN)$ such that all partial derivatives of $f_{2}$ up to order $K$ are computable continuous functions 
and that also satisfies Properties 2) - 5) of \Cref{thm:Rechability}.
To this end, one only has to replace the function $g_{*}$, given in \Cref{sec:AppAuxFunc} by a function that is piecewise a polynomial of order $K+1$ and $K$-times continuously differentiable.
\end{remark}

\section{Extensions}
\label{sec:Extension}

The functions $f_{1}, f_{2} : \RN^{2} \to \RN$ constructed in \Cref{thm:MainThm2D,thm:Rechability} are fairly simply. Since these functions are defined on $\RN^{2}$, the corresponding block coordinate optimization method \eqref{equ:ItarativLocalOpt} is automatically a coordinate-wise iterative optimization as given in Algorithm~\ref{alg:CoordinateWise2}.
However, the derived results hold also for general block coordinate optimization methods as discussed at the beginning of \Cref{sec:IterativeMethod} with at least two blocks.

The main results of this paper (\Cref{thm:MainThm2D,thm:Rechability}) are formulated for constraint minimization problems of the form \eqref{equ:MinProbGen}.
However, it becomes clear from the constructions of $f_{1}$ and $f_{2}$ in the proofs of these two theorems, that the statements hold also for unconstrained optimization problems.
Indeed, for the proofs of these statements, it was merely important that the functions only have global minima (i.e. the functions have no local minima that are not also global minima) and that all these global minima lie in a bounded region of $\RN^{m}$.

\section{Summary and discussion}
\label{sec:Summary}

To solve the optimization problem \eqref{equ:MinProbGen} on a digital computer, one aims for an algorithm with two inputs: the function $f$ and an integer $M\in\NN$. 
Based on these two inputs, the algorithm should compute $\Min_{\R}(f)$ and a corresponding  minimizer $\widehat{\bx}$ with a guaranteed error of at most $2^{-M}$. 
A common way to derive such an algorithm is based on the application of an iterative optimization strategy that optimizes successively over the single coordinates (cf. Algorithm~\ref{alg:CoordinateWise2}).

We have shown in this paper that even for a fixed given function $f_{*}$, it might be impossible to find an effective implementation of such an iterative optimization algorithm.
Since it is impossible to construct a specific algorithm for the fixed function $f_{*}$, it is \emph{a fortiori} impossible to construct a general algorithm who takes $f$ as an input and which is able solve the optimization problem for a larger set (including $f_{*}$) of functions.
The paper discussed two reasons why such an effective implementation of the iterative optimization procedure may fail.
\begin{itemize}
\item[1)] The $\arg\min$-operation \eqref{equ:ItarativLocalOpt} inside the iteration might not be Turing computable for the given function $f_{*}$.
\item[2)] The iterative algorithm converges for any arbitrary initialization vector to a non-computable minimizer of $f_{*}$.
\end{itemize}
\Cref{sec:ProblemWithAssignmentFunc,sec:Reachability} provided two concrete functions $f_{1}$ and $f_{2}$ such that a behavior according to 1) and 2) occur, respectively.

Behavior~1) was illustrated by an example in \Cref{thm:MainThm2D}.
It was shown that the iterative algorithm converges (after at most two steps) to one of the computable global optimizers $(1,0)$ or $(-1,0)$ of the function $f_{1} : \RN^{2}\to\RN$.
However, for any arbitrary initialization vector, already the first $\arg\min$-operation is not Turing computable which means that this step cannot be computed effectively on a Turing machine.
Even more, not only can the $\arg\min$-operation generally not be solved algorithmically, it is even not possible to have a non-trivial approximation of this iterative optimization step (cf. \Cref{sec:ApproximationAssFkt}).

Behavior 2) was illustrated by a function $f_{2}$ in \Cref{thm:Rechability}.
For this function, the iterative algorithm generates a sequence $\left\{ \widetilde{\bx}^{(k)} \right\}_{k\in\NN}$ of local minimizers that always converges either to the global minimizer $(-\xi_{*},0)$ or to the global minimizer $(\xi_{*},0)$, even thought there exist uncountably many other global minimizers.
Then, if $\xi_{*}$ is not a computable number, the convergence towards these two minimizers cannot be effective.

We would like to emphasize, that the shown negative property is a consequence of the local (coordinate-wise) optimization strategy.
Indeed, the two functions $f_{1}$ and $f_{2}$ constructed in \Cref{sec:ProblemWithAssignmentFunc,sec:Reachability}, respectively, both have at least one global minimizer $\widehat{\bx} \in \RN^{m}$ that is computable.
Therefore it is always possible to find a computable sequence $\left\{ \widehat{\bx}^{(k)} \right\}_{k\in\NN} \subset \RN^{m}$ that effectively converges to $\widehat{\bx}$.
Indeed, since $\widehat{\bx}$ is computable, each component $\widehat{x}_{n}$, $n=1,2,\dots,m$ of $\widehat{\bx}$ is a computable number and so there exists a Turing machine $\TM_{n}$ that computes for input $M\in\NN$ a rational number $\widetilde{x}_{n}(M) = \TM_{n}(M)$ such that
$\left| \widehat{x}_{n} - \widetilde{x}_{n}(M) \right| < 2^{-M}$ for every $M\in\NN$.
Then it follows that the sequence $\left\{ \widetilde{\bx}(M)\right\}_{M\in\NN}$ with $\widetilde{\bx}(M) = (\widetilde{x}_{1}(M), \dots, \widetilde{x}_{m}(M))$ satisfies
\begin{equation*}
	\left\| \widehat{\bx} - \widetilde{\bx}(M) \right\|_{2} < \sqrt{m}\, 2^{-M}\,,
	\quad\text{for all}\ M\in\NN\,.
\end{equation*}
Thus $\left\{ \widetilde{\bx}(M)\right\}_{M\in\NN}$ converges effectively to the global optimizer $\widehat{\bx}$, 
and so we have found an effective numerical procedure to compute the global optimizer $\widehat{\bx}$.
So for both functions, there exists a numerical algorithm for computing the global minimizer of \eqref{equ:MinProbGen}.
However, such an algorithm is based on a global optimization strategy. Iterative algorithms that apply a local (i.e. a coordinate-wise) optimization strategy cannot effectively solve \eqref{equ:MinProbGen}.

Our results are also relevant for cases where a global optimization is hard to implement or where it is even impossible to implement such a global optimization (cf., e.g., \cite{PalomarEldar_ConvexOpt,liu2024survey}).
As an example consider a decentralized communication system where it is impossible to collect global information about the whole network at a central point, but where the optimization has to performed locally, based on only restricted knowledge on the network.
Our results are relevant for these scenarios since they correspond to a local (decentralized) optimization, discussed in this paper.

We would also like to point out that our findings are in sharp contrast to the common heuristic.
By this heuristic, it is assumed that a global, jointly optimization over all variables (i.e. over all degrees of freedoms) is too complex, but it is less complex to optimize locally over some (or even a single) dimensions while keeping the other coordinates fixed and to iterate over all dimensions.
However, as discussed in this paper, the local optimization strategy needs to solve iteratively the $\arg\min$-operation dependent on some parameters (the fixed coordinates).
We have shown that the parameter dependent minimizers are uniquely determined, but even for very simple functions in two variables, the mapping from the parameters to the unique minimizer is generally not Turing computable (cf. \Cref{thm:MainThm2D}).
Thus the unique minimizer can generally not be computed effectively from the parameters.
In other words, the computation of the local minimizers of the iterative algorithm is even too complex for a digital computer.
Nevertheless, the function $f_{1}$ from \Cref{thm:MainThm2D} has very simple properties that allows one to determine a global minimizer by standard joint optimization.
So this result is contrary to the common heuristic that an iterative local optimization is less complex than a joint global optimization.

\appendix\section{An auxiliary function}
\label{sec:AppAuxFunc}

In the proof of \Cref{thm:Rechability} we need a particular auxiliary function $g_{*} : \RN\to\RN$.
This appendix defines $g_{*}$ and proves properties of $g_{*}$ needed in the proof of \Cref{thm:Rechability}.
To this end, we first choose an arbitrary number $\xi_{*} \in (0,1)$ that is the limit of a computable sequence $\bsxi = \left\{ \xi_{n} \right\}_{n\in\NN} \subset\QN$ of strictly monotonically decreasing rational numbers, i.e.
\begin{equation}
\label{equ:Seq_a}
	\xi_{n+1} < \xi_{n}\,, \ \text{for all}\ n\in\NN\,,
	\qquad\text{and}\qquad
	\lim_{n\to\infty} \xi_{n} = \xi_{*}\,.
\end{equation}
\begin{remark}
It is important to notice that we require only that $\xi_{*}$ is the limit of a monotonic decreasing sequence. So there might exist no monotonically increasing sequence that converges to $\xi_{*}$.
In such a case $\xi_{*}$ would not be a computable number. So in other words, we explicitly allow $\xi_{*}$ to be non-computable.
\end{remark}
For every $n\in\NN$, we define a continuously differentiable function $g_{n} \in \C^{1}(\RN)$ by
\begin{equation*}
	g_{n}(x) = \left\{\begin{array}{ccc}
	(x + \xi_{n})^{2} & : & x < -\xi_{n}\\
	0 & : & -\xi_{n} \leq x \leq \xi_{n}\\
	(x - \xi_{n})^{2} & : & x > \xi_{n}
	\end{array}\right. .
\end{equation*}
Therewith, we define finally the function
\begin{equation}
\label{equ:gStar}
	g_{*}(x) = \textstyle\sum^{\infty}_{n=1} 2^{-n}\, g_{n}(x)\,,
	\quad x\in\RN\,.
\end{equation}
The function $g_{*}$ depends on the chosen $\xi_{*}$ and on the chosen sequence $\bsxi$.
However, the important properties of $g_{*}$ that are needed in this paper are independent of the choice of $\xi_{*}$ and $\bsxi$.
\Cref{fig:FunctionG} illustrates the shape of $g_{*}$ for some values of $\xi_{*}$, and
the next lemma collects the properties of $g_{*}$ that are needed in the proof \Cref{thm:Rechability}.

\begin{figure}[t]
	\centering
	\includegraphics[scale=0.4]{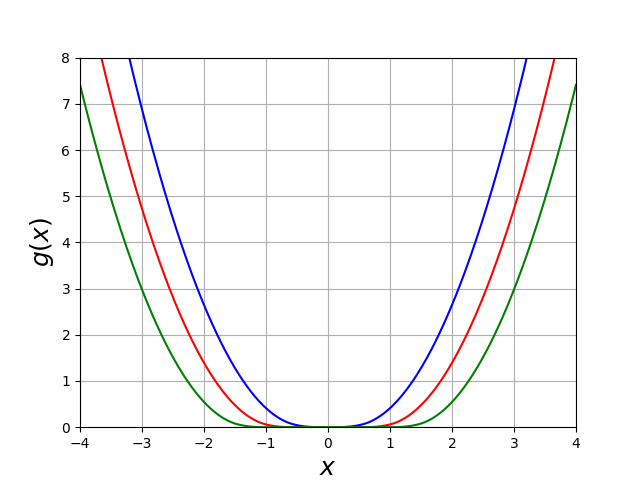}
	\caption{Illustration of the function $g_{*}$ defined in \eqref{equ:gStar} for $\xi_{*} = 1/20$ (blue), $\xi_{*} = 1/2$ (red), and $\xi_{*} = 0.95$ (green).
	In all three cases, the sequence $\xi_{n} = \xi_{*} + 2^{-n}$, $n\in\NN$ was used to produce these plots.}
	\label{fig:FunctionG}
\end{figure}

\begin{lemma}
\label{lem:Properties_gstar}
Let $\xi_{*} \in (0,1)$ be arbitrary and let $\bsxi = \left\{ \xi_{n} \right\}_{n\in\NN} \subset\QN$ be a sequence that satisfies \eqref{equ:Seq_a}.
Then the function $g_{*} : \RN \to \RN$ defined in \eqref{equ:gStar} has the following properties:
\begin{enumerate}
\item $g_{*}$ is even on $\RN$, i.e. $g_{*}(-x) = g_{*}(x)$ for all $x\in\RN$.
\item $g_{*}(x) \geq 0$ for all $x\in\RN$.
\item $g_{*}(x) = 0$ for all $x\in [-\xi_{*},\xi_{*}]$.
\item $g_{*}$ is continuously differentiable on $\RN$.
\item The first derivative $g'_{*}$ is piecewise linear and $g'_{*}(x_{1}) - g'_{*}(\widetilde{x}_{1}) \leq 2 \left(x_{1} - \widetilde{x}_{1} \right)$ for all $x_{1}, \widetilde{x}_{1} \in \RN$.
\item $g_{*}$ is convex.
\item $g_{*}$ is a computable continuous function on $\RN$.
\end{enumerate}
\end{lemma}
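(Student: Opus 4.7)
The seven properties follow fairly directly from the explicit form of each $g_n$, so I would verify them essentially in the stated order, with each item building on the previous.

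\textbf{Items (1)--(3) and (6).} Items (1) and (2) are immediate: each $g_n$ is even and non-negative by inspection, and these properties pass through a non-negatively weighted sum. For (3), because $\bsxi$ strictly decreases to $\xi_{*}$, we have $\xi_{n} > \xi_{*}$ for every $n\in\NN$, so $[-\xi_{*},\xi_{*}]\subset[-\xi_{n},\xi_{n}]$, and thus every $g_{n}$ (hence $g_{*}$) vanishes on $[-\xi_{*},\xi_{*}]$. For (6), each $g_{n}=\bigl(\max(0,|\cdot|-\xi_{n})\bigr)^{2}$ is the composition of the convex map $x\mapsto|x|-\xi_{n}$ with the convex and non-decreasing map $t\mapsto\max(0,t)^{2}$, hence convex; a non-negatively weighted sum of convex functions is convex.

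\textbf{Items (4) and (5).} A direct computation gives
\begin{equation*}
g_{n}'(x) = \begin{cases} 2(x+\xi_{n}), & x<-\xi_{n},\\ 0, & |x|\le\xi_{n},\\ 2(x-\xi_{n}), & x>\xi_{n},\end{cases}
\end{equation*}
so $g_{n}'$ is continuous and $2$-Lipschitz on $\RN$. On any compact $[-A,A]$, the bound $|g_{n}'(x)|\le 2(A+\xi_{1})$ gives uniform convergence of $\sum 2^{-n}g_{n}'$, so the standard term-by-term differentiation theorem (noting $g_{n}(0)=0$ for all $n$) yields $g_{*}\in\C^{1}(\RN)$ with $g_{*}'=\sum_{n\ge 1} 2^{-n}g_{n}'$, proving (4). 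For (5), on each interval $(\xi_{k+1},\xi_{k})$ only the summands with $n\le k$ are non-zero and each is affine, so $g_{*}'$ is affine there; by symmetry the same holds on $(-\xi_{k},-\xi_{k+1})$, and on $|x|\ge\xi_{1}$ all summands are simultaneously affine, establishing the piecewise-linear structure. The one-sided estimate $g_{n}'(x_{1})-g_{n}'(\widetilde{x}_{1})\le 2(x_{1}-\widetilde{x}_{1})$ for $x_{1}\ge\widetilde{x}_{1}$ follows because $g_{n}'$ has (piecewise) derivative in $\{0,2\}$; summing with weights $2^{-n}$ and using $\sum_{n\ge 1}2^{-n}=1$ transfers this bound to $g_{*}'$, which is the form used in the proof of \Cref{thm:Rechability}.

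\textbf{Item (7), the main obstacle.} The delicate point is computability: even though $\xi_{*}$ itself may fail to be computable, each $g_{n}$ depends only on the rational $\xi_{n}$ and is therefore in $\Cc(\RN)$, so the partial sums $S_{N}=\sum_{n=1}^{N}2^{-n}g_{n}$ are uniformly computable in $N$. On any compact $[-A,A]$ the bound $|g_{n}(x)|\le (A+\xi_{1})^{2}$ yields the effective tail estimate $\sup_{|x|\le A}|g_{*}(x)-S_{N}(x)|\le (A+\xi_{1})^{2}\,2^{-N}$, giving a computable modulus of convergence. Combined with the $2$-Lipschitz property of $g_{*}'$ from (5), which furnishes an effective (indeed uniform) modulus of continuity for $g_{*}$, this places $g_{*}\in\Cc(\RN)$. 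The conceptual hurdle is precisely this decoupling: the possibly non-computable limit $\xi_{*}$ never enters the constructive approximations to $g_{*}$, since these are built entirely from the computable rational sequence $\bsxi$.
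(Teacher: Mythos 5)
Your overall structure matches the paper's proof (verify each item directly from the explicit form of the $g_{n}$), and most of the individual arguments are sound — in places more careful than the paper's own. For item (4) you correctly invoke uniform convergence of the differentiated series on compacts (via the Weierstrass test) rather than mere absolute convergence; for item (6) you get convexity directly from convexity of each summand instead of the paper's route through monotonicity of $g_{*}'$; and for item (7) your effective tail bound $\sup_{|x|\le A}|g_{*}(x)-S_{N}(x)|\le (A+\xi_{1})^{2}\,2^{-N}$ is a clean way to make precise that the possibly non-computable limit $\xi_{*}$ never enters the constructive approximations, whereas the paper argues more tersely from the piecewise-polynomial structure with rational breakpoints. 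Your isolation of the one-sided form of the Lipschitz estimate in (5) (valid for $x_{1}\ge\widetilde{x}_{1}$, which is how it is used in the proof of \Cref{thm:Rechability}) is also a reasonable reading of the statement.

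There is, however, a concrete error in your argument for the piecewise linearity in item (5). Since $\{\xi_{n}\}$ is strictly \emph{decreasing}, for $x\in(\xi_{k+1},\xi_{k})$ the summand $g_{n}$ vanishes exactly when $\xi_{n}\ge x$, i.e.\ for $n\le k$; the \emph{non-vanishing} summands are those with $n\ge k+1$ — the opposite of what you claim — and there are infinitely many of them. Consequently the ``finitely many affine summands'' shortcut does not apply on the inner intervals. The conclusion is still true, but to obtain it you must argue as the paper does: on $(\xi_{k+1},\xi_{k}]$ one has $g_{*}(x)=\sum_{n\ge k+1}2^{-n}(x-\xi_{n})^{2}$, an absolutely convergent series of quadratics whose coefficient sums converge, so that
\begin{equation*}
	g_{*}'(x)=\Bigl(\textstyle\sum_{n\ge k+1}2^{-n+1}\Bigr)x-\textstyle\sum_{n\ge k+1}2^{-n+1}\xi_{n}
\end{equation*}
is affine there with slope $2^{1-k}\le 2$. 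Your separate derivation of the Lipschitz bound — each $g_{n}'$ is nondecreasing and $2$-Lipschitz, and the weights $2^{-n}$ sum to $1$ — is unaffected by this slip and remains valid.
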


\begin{proof}
Properties 1 -- 3 are obvious from the definition.
Property 4 follows because the series \eqref{equ:gStar} that defines $g_{*}$ as well as the corresponding series for $g'_{*}$ are absolute convergent.

To verify Property 5, we define for $n=1, 2, 3, \dots$, the intervals
\begin{equation*}
\begin{array}{lcl}
	\mathcal{I}_{-1} = (-\infty,-\xi_{1}) & \text{and} & \mathcal{I}_{1} = (\xi_{1},\infty) \\[0.7ex]
	\mathcal{I}_{-n} = [-\xi_{n-1}, -\xi_{n}) & \text{and} & \mathcal{I}_{n} = (\xi_{n},\xi_{n-1}]\\[0.7ex]
	\mathcal{I}_{\infty} = [-\xi_{*},\xi_{*}]\,.
\end{array}	
\end{equation*}
Then \eqref{equ:gStar} can be written as
\begin{equation*}
	g_{*}(x) = \sum^{\infty}_{k=n} \frac{1}{2^{k}} \left( x - \xi_{k} \right)^{2}\,,
	\quad \text{for}\ x\in \mathcal{I}_{n}
\end{equation*}
and $n=1,2,\dots$, and similarly for $x\in\mathcal{I}_{-n}$.
Differentiating term by term within the sum yields 
\begin{equation}
\label{equ:gStrichStar}
	g_{*}'(x) =
	\left\{\begin{array}{lll}
	c_{1}(n)\, x + c_{0}(n) & : & x \in \mathcal{I}_{-n}\\
	0 & : & x \in \mathcal{I}_{\infty}\\
	c_{1}(n)\, x - c_{0}(n) & : & x \in \mathcal{I}_{n}
	\end{array}\right. ,	
\end{equation}
with positive constants $c_{0}(n)$ and $c_{1}(n)$ that satisfy
\begin{align}
\label{equ:slop_gStichStar}
	0 < &c_{1}(n) = \frac{1}{2^{n-2}} \leq 2
	\qquad\text{and}\\\
	c_{1}(n)\, \xi_{*} \leq &c_{0}(n) = \sum^{\infty}_{k=n}\frac{\xi_{k}}{2^{k-1}} \leq c_{1}(n)\, \xi_{n}\,.\nonumber
\end{align}
This shows that $g'_{*}$ is linear on all intervals $\mathcal{I}_{n}$ and one can easily verify (directly) that $g'_{*}$ is continuous on $\RN$.
Property 5 follows from the observation that the slopes of the linear pieces are given by $c_{1}(n)$ which satisfy \eqref{equ:slop_gStichStar}.

The convexity, i.e., Property 6, follows from \eqref{equ:gStrichStar}. It shows that $g'_{*}$ is continuous and monotonically increasing because the sequence $\left\{ c_{1}(n) \right\}_{n\in\NN}$ is monotonically decreasing.
The computability of $g_{*}$ follows from the observation that $g_{*}$ is a polynomial on every interval $\mathcal{I}_{n}$ and that the boundary points $\xi_{n}$ of these intervals are rational numbers.
\end{proof}


\end{document}